\newtheorem{theorem}{Theorem}
\newtheorem{definition}{Definition}
\newtheorem{criterion}{Criterion}
\theoremstyle{definition}
\theoremstyle{plane}
\def \beq{ \begin{equation} }
\def \eeq{\end{equation}}
\title{Polygonal homographic orbits of the curved $n$-body problem}
\begin{document}
\maketitle
\markboth{Florin Diacu}{Polygonal homographic orbits of the curved $n$-body problem}
\author{\begin{center}
Florin Diacu\\
\smallskip
{\footnotesize Pacific Institute for the Mathematical Sciences\\
and\\
Department of Mathematics and Statistics\\
University of Victoria\\
P.O.~Box 3060 STN CSC\\
Victoria, BC, Canada, V8W 3R4\\
diacu@math.uvic.ca\\
}\end{center}

}

\vskip0.5cm

\begin{center}
\today
\end{center}

\begin{abstract}
In the $2$-dimensional $n$-body problem, $n\ge 3$, in spaces of constant curvature, $\kappa\ne 0$, we study polygonal homographic solutions. We first provide necessary and sufficient conditions for the existence of these orbits and then consider the case of regular polygons. We further use this criterion to show that, for any $n\ge 3$, the regular $n$-gon is a polygonal homographic orbit if and only if all masses are equal. Then we prove the existence of relative equilibria of non-equal masses on the sphere of curvature $\kappa>0$ for $n=3$ in the case of scalene triangles. Such triangular relative equilibria occur only along fixed geodesics and are generated from fixed points of the sphere. Finally, through a classification of the isosceles case, we prove that not any three masses can form a triangular relative equilibrium. 
\end{abstract}
 
\section{Introduction}
 
We study here the curved $n$-body problem, defined as the motion of $n$ point particles of masses $m_1,m_2,\dots,m_n>0$ in spaces of constant curvature, $\kappa\ne 0$, under the influence of a natural extension of Newton's gravitational law. The potential defining this law is given by the force function $U_\kappa$, whose expression appears in \eqref{forcef}. The corresponding potential, given by $-U_\kappa$, preserves the basic properties of its Euclidean analogue: it is a harmonic function in the $3$-dimensional space, i.e.\ it satisfies Laplace's equation, and generates a central field in which bounded orbits are closed, \cite{Lie2}, in agreement with an old result proved by Joseph Louis Bertrand for the Euclidean case, \cite{Ber}.

The research direction we are following here started in the 1830s, when J\'anos Bolyai and Nikolai Lobachevsky independently proposed a  curved 2-body problem in the hyperbolic space ${\bf H}^3$, given by a force proportional to the inverse of the area of the sphere of radius equal to the distance between bodies. This problem was studied by top mathematicians, such as Lejeune Dirichlet, Ernest Schering, \cite{Sche}, \cite{Sche1}, Wilhem Killing, \cite{Kil1}, \cite{Kil10}, \cite{Kil2}, and Heinrich Liebmann, \cite{Lie1}, \cite{Lie2}, \cite{Lie3}. Schering was the one who came up with an analytic form of the potential, which can be expressed in terms of the cotangent of the distance,
for $\kappa>0$, and the hyperbolic cotangent of the distance, for $\kappa<0$.
The Newtonian law is recovered when $\kappa\to 0$. Recently, Jos\'e Cari\~nena, Manuel Ra\~nada, and Mariano Santander, \cite{Car}, proved several new results
for this  $2$-body problem  and revisited some old properties with the help of modern methods. The study of the quantum analogue of the curved $2$-body problem was proposed by Erwin Schr\"odinger, \cite{Schr}, and continued by Leopold Infeld, \cite{Inf}, and Alfred Schild, \cite{InfS}. 

Other attempts at extending the Newtonian case to spaces of constant curvature, such as the efforts of Rudolph Lipschitz, \cite{Lip}, did not survive, mostly because the proposed potentials lacked the basic physical properties mentioned above. Unlike Liebmann, who showed that all the fundamental orbits of the Kepler problem have analogues in curved space, Lipschitz could not obtain explicit solutions; he only succeeded to express the orbits in terms of elliptic integrals. 

In the direction of research proposed by Bolyai and Lobachevsky, the generalization
of the equations of motion to any $n\ge 2$ was given in \cite{Diacu1}, a paper posted in
arXiv, but submitted for publication as two connected papers, \cite{Diacu001} and \cite{Diacu002}, in which we obtained a unified framework of approaching the problem for any $\kappa\ne 0$. We also proved there the existence of several classes of relative equilibria, including the Lagrangian orbits, i.e.\ the case $n=3$ of the equilateral triangle. Relative equilibria are orbits for which the configuration of the system remains congruent with itself for all time, so the mutual distances between bodies are constant during the motion. 

It is worth mentioning that the study of the curved $n$-body problem, for $n\ge 3$,  might help us better understand the nature of the physical space. Gauss allegedly tried to determine the geometry of the universe by measuring the angles of a triangle formed by the peaks of three mountains. Even if the goal of his topographic measurements was different from what anecdotical history attributes to him (see \cite{Mill}), this method of deciding the nature of space remains valid for astronomical distances. But since we cannot measure the angles of cosmic triangles, we could alternatively check whether certain (potentially observable) celestial motions occur in the universe, and
thus decide whether the physical space has negative, zero, or positive curvature.

Specifically, we showed in \cite{Diacu1} and \cite{Diacu001} that while Lagrangian orbits of non-equal masses are known to occur for $\kappa=0$, they must have equal masses for $\kappa\ne 0$. Since Lagrangian solutions of non-equal masses exist in our solar system (such as the triangle formed by the Sun, Jupiter, and the Trojan asteroids), we can conclude that, if assumed to have constant curvature, the physical space is Euclidean for distances of the order $10^1$ AU. The discovery of new orbits of the curved $n$-body problem might help us better understand the large-scale geometry of the universe.

The most recent papers on the curved $n$-body problem deal either with singularities, \cite{Diacu1bis}, \cite{Diacu002}, a subject we will not approach here, or with homographic solutions and, in particular, with homothetic orbits and relative equilibria, \cite{Diacu1}, \cite{Diacu002}, \cite{Diacu4}. Homographic solutions are orbits whose configuration remains similar to itself all along the motion. In particular, when rotation takes place without expansion or contraction, the homographic orbits are called relative equilibria, as mentioned earlier. They behave like rigid bodies, maintaining constant mutual distances. Homothetic solutions are homographic orbits that experience expansion and/or contraction, but no rotation.

The homograpic solutions can be put in a broader perspective. They are also the object of Saari's conjecture, \cite{Saari}, \cite{Diacu2}, which we partially solved for the curved $n$-body problem, \cite{Diacu1}, \cite{Diacu001}, as well as Saari's homographic conjecture, \cite{Saari}, \cite{Diacu3}. Both have recently generated a lot of interest in classical celestial mechanics (see the references in \cite{Diacu2}, \cite{Diacu3}) and are still unsolved in general. 

In the classical Newtonian case, \cite{Win}, as well as in more general classical contexts, \cite{Diacu0}, the standard concept for understanding homographic solutions is that of central configuration. We will not employ it here since most computations appear to be simpler without using it. The reason for these complications is connected to the absence of the integrals of the centre of mass and linear momentum from
the curved $n$-body problem. These integrals seem to be specific only to Euclidean space. Indeed, $n$-body problems derived by discretizing Einstein's field equations, as obtained by Tullio Levi-Civita, \cite{Civ}, \cite{Civita}, Albert Einstein, Leopold Infeld, and Banesh Hoffmann, \cite{Ein},  and Vladimir Fock, \cite{Fock}, also lack such integrals.

In this paper we study polygonal homographic orbits of the 2-dimension\-al curved $n$-body problem. In Section 2, we introduce the notation and the equations of motion as
well as their first integrals. In Section 3 we define polygonal homographic orbits and
their basic particular cases: homothetic orbits and relative equilibria. We
also provide a motivation for our definitions, which are given in the spirit of the 
Euclidean case. In Section 4, we state and prove necessary and sufficient conditions for the existence of polygonal homographic orbits. Section 5 is dedicated to the study
of regular polygons. We show, on one hand, that if the masses are equal, the
regular $n$-gon is a solution of the equations of motion for any $n\ge 3$. On the
other hand, we prove that regular $n$-gons can be solutions only when the masses
are equal. These results extend the Perko-Walter-Elmabsout theorem, \cite{Perko}, \cite{Elma}, to spaces of non-zero constant curvature. In Section 6, we focus on the case $n=3$ and prove that the equations
of motion admit no homographic orbits if the triangle is not equilateral. For $\kappa>0$
this result is true as long as the homographic orbit is not a relative equilibrium rotating
along a great circle of the sphere. In Section 7 we discuss the orbits omitted in the
previous section, namely the relative equilibria that move along a geodesic of the
sphere. We prove the existence of relative equilibria of non-equal masses in the case of scalene triangles. Such triangular relative equilibria occur only along fixed geodesics and are generated from fixed points on the sphere. Finally we show that not any three masses can form a triangular relative equilibrium by providing a large class of counterexamples in the isosceles case.

\section{Equations of motion}\label{equations}

In this section we introduce the equations of motion of the curved $n$-body problem 
on $2$-dimensional manifolds of constant curvature, namely spheres embedded in $\mathbb{R}^3$, for $\kappa>0$, and the upper sheets of hyperboloids of two sheets\footnote{The upper sheet of the hyperboloid of two sheets corresponds to Weierstrass's model of hyperbolic geometry (see Appendix in \cite{Diacu1} or \cite{Diacu001}).} embedded in the Minkovski space ${\mathbb{R}}^{2,1}$, for $\kappa<0$. 

Consider the point particles (bodies) of masses $m_1, m_2,\dots, m_n>0$ in $\mathbb{R}^3$, for $\kappa>0$,
and in ${\mathbb{R}}^{2,1}$, for $\kappa<0$, whose positions are given by the vectors
${\bf q}_i=(x_i,y_i,z_i), \ i=\overline{1,n}$. Let ${\bf q}=
({\bf q}_1, {\bf q}_2,\dots,{\bf q}_n)$ be the configuration of the system,
and ${\bf p}=({\bf p}_1, {\bf p}_2,\dots,{\bf p}_n)$, with ${\bf p}_i=m_i\dot{\bf q}_i,\ i=\overline{1,n}$,
representing the momentum.
We define the gradient operator with respect to the vector ${\bf q}_i$ as
$$\widetilde\nabla_{{\bf q}_i}=(\partial_{x_i},\partial_{y_i},\sigma\partial_{z_i}),$$
where $\sigma$ is the signum function,
\begin{equation}
\sigma=
\begin{cases}
+1, \ \ {\rm for} \ \ \kappa>0\cr
-1, \ \ {\rm for} \ \ \kappa<0,\cr
\end{cases}\label{sigma}
\end{equation}
and let $\widetilde\nabla=
(\widetilde\nabla_{{\bf q}_1},\widetilde\nabla_{{\bf q}_2},\dots,\widetilde\nabla_{{\bf q}_n})$.
For the 3-dimensional vectors ${\bf a}=(a_x,a_y,a_z)$ and ${\bf b}=(b_x,b_y,b_z)$, 
we define the inner product
\begin{equation}
{\bf a}\odot{\bf b}:=(a_xb_x+a_yb_y+\sigma a_zb_z)
\label{dotpr}
\end{equation}
and the cross product
\begin{equation}
{\bf a}\otimes{\bf b}:=(a_yb_z-a_zb_y, a_zb_x-a_xb_z, 
\sigma(a_xb_y-a_yb_x)).
\end{equation}

The Hamiltonian function of the system describing the motion of the $n$-body problem in spaces of constant curvature is
$$H_\kappa({\bf q},{\bf p})=T_\kappa({\bf q},{\bf p})-U_\kappa({\bf q}),$$
where 
$$
T_\kappa({\bf q},{\bf p})={1\over 2}\sum_{i=1}^nm_i^{-1}({\bf p}_i\odot{\bf p}_i)(\kappa{\bf q}_i\odot{\bf q}_i)
$$
defines the kinetic energy and
\begin{equation}
U_\kappa({\bf q})=\sum_{1\le i<j\le n}{m_im_j
|\kappa|^{1/2}{\kappa{\bf q}_i\odot{\bf q}_j}\over
[\sigma(\kappa{\bf q}_i
\odot{\bf q}_i)(\kappa{\bf q}_j\odot{\bf q}_j)-\sigma({\kappa{\bf q}_i\odot{\bf q}_j
})^2]^{1/2}}
\label{forcef}
\end{equation}
is the force function, $-U_\kappa$ representing the potential energy\footnote{In \cite{Diacu1} and \cite{Diacu001}, we showed how this expression of $U_\kappa$ follows from the cotangent potential for $\kappa\ne 0$, and that $U_0$ is the Newtonian potential of the Euclidean problem, obtained as $\kappa\to 0$.}. Then the Hamiltonian form of the equations of motion is given by the system
\begin{equation}
\begin{cases}
\dot{\bf q}_i=
m_i^{-1}{\bf p}_i,\cr
\dot{\bf p}_i=\widetilde\nabla_{{\bf q}_i}U_\kappa({\bf q})-m_i^{-1}\kappa({\bf p}_i\odot{\bf p}_i)
{\bf q}_i, \ \  i=\overline{1,n}, \ \kappa\ne 0,
\label{Ham}
\end{cases}
\end{equation}
where the gradient of the force function has the expression
\begin{equation}
{\widetilde\nabla}_{{\bf q}_i}U_\kappa({\bf q})=\sum_{\substack{j=1\\ j\ne i}}^n{m_im_j|\kappa|^{3/2}(\kappa{\bf q}_j\odot{\bf q}_j)[(\kappa{\bf q}_i\odot{\bf q}_i){\bf q}_j-(\kappa{\bf q}_i\odot{\bf q}_j){\bf q}_i]\over
[\sigma(\kappa{\bf q}_i
\odot{\bf q}_i)(\kappa{\bf q}_j\odot{\bf q}_j)-\sigma({\kappa{\bf q}_i\odot{\bf q}_j
})^2]^{3/2}}.
\label{gradient}
\end{equation}
The motion is confined to the surface of nonzero constant curvature $\kappa$, i.e.\ $({\bf q},{\bf p})\in {\bf T}^*({\bf M}_\kappa^2)^n$,  where ${\bf T}^*({\bf M}_\kappa^2)^n$ is the cotangent bundle of the configuration space $({\bf M}^2_\kappa)^n$, and
$$
{\bf M}^2_\kappa=\{(x,y,z)\in\mathbb{R}^3\ |\ \kappa(x^2+y^2+\sigma z^2)=1\}.
$$ 
In particular, ${\bf M}^2_1={\bf S}^2$ is the 2-dimensional sphere, and ${\bf M}^2_{-1}={\bf H}^2$ is the 2-dimensional hyperbolic plane, represented by the upper sheet of the hyperboloid of two sheets (see the Appendix of \cite{Diacu1} or \cite{Diacu001} for more details). 
We will also denote ${\bf M}^2_\kappa$ by ${\bf S}^2_\kappa$ for $\kappa>0$
and by ${\bf H}^2_\kappa$ for $\kappa<0$.

Notice that the $n$ constraints given by $\kappa{\bf q}_i\odot{\bf q}_i=1, i=\overline{1,n},$ imply that ${\bf q}_i\odot{\bf p}_i=0$, so the $6n$-dimensional system \eqref{Ham} has  $2n$ constraints.
The Hamiltonian function provides the integral of energy,
$$
H_\kappa({\bf q},{\bf p})=h,
$$
where $h$ is the energy constant. Equations \eqref{Ham} also have the three integrals 
of the angular momentum,
\begin{equation}
\sum_{i=1}^n{\bf q}_i\otimes{\bf p}_i={\bf c},
\label{ang}
\end{equation}
where ${\bf c}=(\alpha, \beta, \gamma)$ is a constant vector. Unlike in the
Euclidean case, there are no integrals of the center of mass and linear
momentum. Their absence complicates the study of the problem
since many of the standard methods don't apply anymore.

Using the fact that $\kappa{\bf q}_i\odot{\bf q}_i=1$ for $i=\overline{1,n}$, we can write system
\eqref{Ham} as
\begin{equation}
\ddot{\bf q}_i=\sum_{\substack{j=1\\ j\ne i}}^n{m_j|\kappa|^{3/2}[{\bf q}_j-(\kappa{\bf q}_i\odot{\bf q}_j){\bf q}_i]\over
[\sigma-\sigma({\kappa{\bf q}_i\odot{\bf q}_j
})^2]^{3/2}}-(\kappa\dot{\bf q}_i\odot\dot{\bf q}_i){\bf q}_i, \ \ i=\overline{1,n},
\label{second}
\end{equation}
which is the form of the equations of motion we will use in this paper. The
sums on the right hand side of the above equations represent the gradient of the potential. When $\kappa\to 0$, both the
sphere (for $\kappa\to 0, \kappa>0$) and the upper sheet of the hyperboloid
of two sheets (for $\kappa\to 0, \kappa<0$) become planes at infinity,
relative to the centre of the frame. The segments through the origin of
the frame whose angle measures the distance between two points on the curved surface become parallel and infinite, so the distance in the limit plane is the 
Euclidean distance. Consequently the potential tends to the Newtonian potential as $\kappa\to 0$ (see \cite{Diacu1} or \cite{Diacu001} for more details).  The terms involving the velocities occur because of the constraints imposed by the curvature. They vanish when $\kappa\to 0$.


\section{Polygonal homographic orbits}

In this section we define the polygonal homographic solutions of the curved $n$-body
problem as well as two remarkable subclasses of solutions: the polygonal homothetic orbits and the polygonal relative equilibria. Then we justify the content of these definitions.

\begin{definition}
A solution of equations \eqref{second}, which describe the curved $n$-body problem, is called polygonal homographic if the bodies of masses $m_1,m_2,\dots, m_n>0$, with 
$n\ge 3$, form a polygon that is orthogonal to the $z$ axis and remains similar to itself for all 
time $t$. 
\label{deflag}
\end{definition}

According to Definition \ref{deflag}, the size of a polygonal homographic solution can vary, but its shape remains the same. Notice that we imposed the condition that
the plane of the polygon is always perpendicular to the $z$ axis. This condition is equivalent to saying that all masses have the same coordinate $z(t)$, which may vary in time. This condition is not imposed for mere simplicity but because polygonal homographic solutions may not exist without it. Though a complete proof of this conjecture is still eluding us, we will explain later in this section why we think this property is true.

We can represent a polygonal homographic solution of the curved $n$-body problem 
in the form
 \begin{equation}
{\bf q}
=({\bf q}_1,\dots, {\bf q}_n),\ \  {\bf q}_i=(x_i,y_i,z_i),
\label{polysol}
\end{equation}
$$
x_i=r\cos(\omega+\alpha_i),\  y_i=r\sin(\omega+\alpha_i),\ z_i=z,\ i=\overline{1,n},
$$
where $0\le\alpha_1<\alpha_2<\dots\alpha_n<2\pi$ are constants; the function $z=z(t)$ satisfies $z^2=\sigma\kappa^{-1}-\sigma r^2$; $\sigma$ is the signum
function defined in \eqref{sigma}; $r:=r(t)$ is the {\it size function}; and $\omega:=\omega(t)$ is the {\it angular function}.

Indeed, for every time $t$, we have that $x_i^2(t)+y_i^2(t)+\sigma z_i^2(t)=\kappa^{-1},\ i=\overline{1,n}$, which means that the bodies move on the surface ${\bf M}_{\kappa}^2$, and the angles between any two bodies, seen from the centre of the circle containing the polygon, are invariant in time. Therefore representation \eqref{polysol} of the polygonal homographic orbits agrees with Definition \ref{deflag}.

\begin{definition}
A polygonal homographic solution of equations \eqref{second}, which describe the curved $n$-body problem, is called polygonal homothetic if the polygon having at its vertices the bodies of masses $m_1,m_2,\dots, m_n>0$, with $n\ge 3,$ expands or contracts, but does not rotate around the $z$ axis. 
\end{definition}

In terms of representation \eqref{polysol}, a polygonal homographic solution is polygonal homothetic if $\omega(t)$ is constant, but $r(t)$ is not. Such orbits occur, for instance, when $n$ bodies of equal masses, lying initially at the vertices of a regular polygon inscribed in a non-geodesic circle of the sphere, are released with zero initial velocities, to end up in a total collision.

\begin{definition}
A polygonal homographic solution of equations \eqref{second}, which describe the curved $n$-body problem, is called a polygonal relative equilibrium if the polygon having at its vertices the bodies of masses $m_1,m_2,\dots,$ $m_n>0$, with $n\ge 3$, rotates around the $z$ axis and maintains fixed mutual distances. 
\end{definition}

In terms of representation \eqref{polysol}, a polygonal relative equilibrium occurs when $r(t)$ is constant, but $\omega(t)$ is not. These orbits have a rich recent history and have been extensively studied in the general context of geometric mechanics (see, e.g., \cite{abra}, \cite{tudor}).

We will further loosely use the terms ``dynamical polygon'' or ``dynamical $n$-gon'' to describe any of the polygonal homographic, homothetic, or relative equilibrium orbits that will occur in this paper. These terms will also occur when we check potential
solutions, prior to knowing whether they satisfy the equations of motion.

Using the concept of relative equilibrium, we can now provide a justification for introducing the orthogonality condition in Definitions 1, 2, and 3. Consider a polygon formed by the bodies of masses $m_1,m_2,\dots, m_n>0$, which move on the surface ${\bf M}_k^2$ according to equations \eqref{second}. Unlike in representation \eqref{polysol}, the plane of the polygon can have any angle (not only $\pi/2$) relative to the $z$ axis. We assume this angle to be constant in time. We assign initial velocities such that the dynamical polygon rotates around the $z$ axis, which passes at all times through the same point inside the polygon. The Principal Axis Theorem (see Appendix in \cite{Diacu1} or \cite{Diacu001}) guarantees the validity of this scenario without any loss of generality.

Let us now seek necessary conditions for the existence of the above described motion. Notice that the projection of the polygon to the $xy$ plane is, at any time $t$, a polygon congruent with the projection obtained at $t=0$. We can then describe the motion of the real polygon in terms of the angles of the projected polygon. So let us assume that the relative equilibrium is represented by the coordinates ${\bf q}_i(t)=(x_i(t),y_i(t), z_i(t)),\ i=\overline{1,n}$, given by
\begin{equation}
x_i(t)=r_i\cos(\Omega t+\alpha_i),\ y_i(t)=r_i\sin(\Omega t+\alpha_i),\ z_i(t)=z_i \ {\rm (constant)}
\label{newrep}
\end{equation}
where $0\le\alpha_1<\alpha_2<\dots\alpha_n<2\pi$, $\Omega\ne 0$, and $r_i>0$ are constants; $z_i^2=\sigma\kappa^{-1}-\sigma r_i^2$; and $\sigma$ is the signum function defined in \eqref{sigma}.

For such a solution to exist, it is necessary that the total angular momentum is
the zero vector or a vector parallel with the $z$ axis. Otherwise the angular-momentum vector would rotate around the $z$ axis, in violation of the angular-momentum integrals \eqref{ang}. This means that at least the first two components of the vector $\sum_{i=1}^nm_i{\bf q}_i\otimes\dot{\bf q}_i$ are zero, i.e.
$$
\sum_{i=1}^nm_i(y_i\dot{z}_i-\dot{y}_iz_i)= 
\sum_{i=1}^nm_i(x_i\dot{z}_i-\dot{x}_iz_i)=0.
$$
Using \eqref{newrep} and the fact that $\Omega\ne 0$, these two equations take the form
$$
\sum_{i=1}^n m_ir_iz_i\cos(\Omega t+\alpha_i)=\sum_{i=1}^nm_ir_iz_i\sin(\Omega t+\alpha_i)=0.
$$
In general, the above conditions for the existence of motions described above are not satisfied for all values of $t$. But there are exceptions, such as when all $z_i$ (and implicitly all $r_i$) are equal, all $m_i$ are equal, and the polygon is regular. So the case when all $z_i$ are equal, which implies orthogonality relative to the $z$ axis, seems like a good point to start from. This position is supported by the proof that  Lagrangian solutions must be orthogonal to the $z$ axis (see \cite{Diacu1} or \cite{Diacu001}). These remarks justify our choice of the orthogonality condition in the definitions of this section.


\section{Necessary and sufficient conditions}

The goal of this section is to state and prove two equivalent criteria that provide 
necessary and sufficient conditions for the existence of polygonal homographic solutions of the curved $n$-body problem. The first criterion can be expressed as follows.

\begin{criterion}
Consider $n\ge 3$ bodies of masses $m_1,m_2,\dots, m_n>0$ moving
on the surface ${\bf M}^2_\kappa$. The necessary and sufficient
conditions that a solution of the form \eqref{polysol} is a polygonal 
homographic orbit of equations \eqref{second} are given by the equations 
\begin{equation}
\delta_1=\delta_2=\dots=\delta_n\ \ {\rm and}\ \  \gamma_1=\gamma_2=\dots=\gamma_n,
\label{deltagamma}
\end{equation}
where
\begin{equation}
\delta_i=\sum_{j=1, j\neq i}^nm_j\mu_{ji}, \ \
\gamma_i=\sum_{j=1,j\neq i}^nm_j\nu_{ji},\ \ i=\overline{1,n},
\end{equation}
\begin{equation}\mu_{ji}=\frac{1}{c_{ji}^{1/2}(2-c_{ji}\kappa r^2)^{3/2}}, \ \  \nu_{ji}=\frac{s_{ji}}{c_{ji}^{3/2}(2-c_{ji}\kappa r^2)^{3/2}},
\label{munu}
\end{equation}
\begin{equation}
s_{ji}=\sin(\alpha_j-\alpha_i),\ \ c_{ji}=1-\cos(\alpha_j-\alpha_i), \ \ i,j=\overline{1,n},
\ \ i\ne j.
\label{sin-cos}
\end{equation}
\label{EandU}
\end{criterion}
\begin{proof}
Let us check in equations \eqref{second} a solution of the form \eqref{polysol}. 
For this purpose we first compute that 
$$
\dot{x}_i=\dot{r}\cos(\omega+\alpha_i)-r\dot\omega\sin(\omega+\alpha_i),\ \
\dot{y}_i=\dot{r}\sin(\omega+\alpha_i)+r\dot\omega\cos(\omega+\alpha_i)
$$
$$
\dot{z}_i=-\sigma r\dot{r}(\sigma\kappa^{-1}-\sigma r^2)^{-1/2}.
$$
$$
\ddot{x}_i=(\ddot{r}-r\dot{\omega}^2)\cos(\omega+\alpha_i)-(r\ddot\omega+2\dot{r}
\dot\omega)\sin(\omega+\alpha_i),
$$
$$
\ddot{y}_i=(\ddot{r}-r\dot{\omega}^2)\sin(\omega+\alpha_i)+(r\ddot\omega+2\dot{r}
\dot\omega)\cos(\omega+\alpha_i),
$$
$$
\ddot{z}_i=-\sigma r \ddot{r}(\sigma\kappa^{-1}-\sigma r^2)^{-1/2}-\kappa^{-1}
\dot{r}^2(\sigma\kappa^{-1}-\sigma r^2)^{-3/2}.
$$
Some long but straightforward computations for the $\ddot{z}_i$ component lead 
us to to the equations
\begin{equation}
{\ddot r}=r(1-\kappa r^2)\dot\omega-\frac{\kappa r\dot r^2}{1-\kappa r^2}-\Delta_i,\
\ i=\overline{1,n},
\label{r}
\end{equation}
where 
\begin{equation}
\Delta_i=\sum_{j=1, j\neq i}^n\frac{m_j(1-\kappa r^2)}{c_{ji}^{1/2}r^2(2-c_{ji}\kappa r^2)^{3/2}},\ \ i=\overline{1,n},
\label{Delta}
\end{equation}
and the constants $c_{ji}$ are defined in \eqref{sin-cos}. Notice that, as long as
the bodies stay away from collisions, we have $c_{ji}>0$.

From the equations corresponding to $\ddot{x}_i$ and $\ddot{y}_i$, we further apply the formula for the cosine of the sum of two angles to
$\cos(\omega+\alpha_j)$, viewed as $\cos[(\omega+\alpha_i)+(\alpha_j-\alpha_i)]$.
Then, using the fact that the equations must be satisfied for all times $t$, and comparing the similar terms, we recover equations \eqref{r} as well as obtain some new equations,
\begin{equation}
r \ddot\omega + 2\dot r\dot\omega-\Gamma_i=0, \ \ i=\overline{1,n},
\label{omega}
\end{equation}
where
\begin{equation}
\Gamma_i=\sum_{j=1,j\neq i}^n\frac{m_js_{ji}}{c_{ji}^{3/2}r^2(2-c_{ji}\kappa r^2)^{3/2}},\ \ i=\overline{1,n},
\label{Gamma}
\end{equation}
and the constants $s_{ji}, c_{ji},\ i, j=\overline{1,n}, i\ne j,$ are defined in \eqref{sin-cos}.
Notice that $s_{ji}$ can have any sign, with $s_{ji}=0$ only
if $\alpha_j-\alpha_i=\pm\pi/2$ or $\pm3\pi/2$.

Equations \eqref{r} and \eqref{omega} describe the motion of the polygonal
homographic orbit with the help of the size function, $r$, and the angular
function, $\omega$. But this system of differential equations makes sense only if
the conditions
$$
\Delta_1=\Delta_2=\dots=\Delta_n\ \ {\rm and}\ \  \Gamma_1=\Gamma_2=\dots=\Gamma_n
$$
are satisfied, where the functions $\Delta_i, \Gamma_i, i=\overline{1,n},$ are defined in \eqref{Delta} and \eqref{Gamma}, respectively. Then, for any initial conditions $(r(0),\omega(0))$ that are not singular (i.e.\ are not collisions for any $\kappa\ne 0$, and are not antipodal for $\kappa>0$, see \cite{Diacu1} or \cite{Diacu002}), we are guaranteed the local existence and uniqueness of an analytic solution for the system given by equations \eqref{r} and \eqref{omega}. Since the phase space is a connected manifold, this solution can be analytically extended to a global solution, defined either for all time or until the orbit reaches a singular configuration, as it may happen, for instance, in the case of a homothetic orbit that ends in a total collision.

Multiplying equation \eqref{r} by $r^2/(1-\kappa r^2)$ and equation 
\eqref{omega} by $r^2$, the conditions for the existence of polygonal homographic
orbits reduce to
$$
\delta_1=\delta_2=\dots=\delta_n\ \ {\rm and}\ \  \gamma_1=\gamma_2=\dots=\gamma_n,
$$
as defined in the above statement. This remark completes the proof.
\end{proof}

\noindent{\it Remark}. Notice that, if $\delta$ denotes any $\delta_i$ and
$\gamma$ denotes any $\gamma_i, i=\overline{1,n}$, the functions $\mu_{ji}$ and $\delta$ are always positive, while $\nu_{ji}$ and $\gamma$  can be negative, positive, or zero.

\medskip

We can restate Criterion \ref{EandU} in terms of linear algebraic systems as follows.
The equivalence between Criterion  \ref{EandU} and Criterion \ref{secondCriterion} is obvious.

\begin{criterion}
Consider $n\ge 3$ bodies of masses $m_1,m_2,\dots, m_n>0$ moving
on the surface ${\bf M}^2_\kappa$. The necessary and sufficient
conditions that a solution of the form \eqref{polysol} is a polygonal 
homographic orbit of equations \eqref{second} are that there exist
$\delta>0$ and $\gamma\in{\mathbb R}$ such that the linear systems
\begin{equation}
\Delta{\bf m}^T={\bf d}^T\ \ {\rm and}\ \ \Gamma{\bf m}^T={\bf e}^T
\end{equation}
have the same set of solutions, where
$${\bf m}=(m_1,m_2,\dots,m_n),\ \ {\bf d}=(\underbrace{\delta,\delta,\dots,\delta}_\text{$n$ times}), \ \ {\bf e}=(
\underbrace{\gamma,\gamma,\dots,\gamma}_\text{$n$ times}),$$
$$\Delta=\left[ \begin{array}{ccccc}
0 & \mu_{21} & \mu_{31} & \dots & \mu_{n1} \\
\mu_{12} & 0 & \mu_{32} & \dots & \mu_{n2}\\
\mu_{13} & \mu_{23} & 0 & \dots & \mu_{n3}\\
\vdots & \vdots & \vdots & {} & \vdots \\
\mu_{1n} & \mu_{2n} & \mu_{3n} & \dots & 0\\
\end{array} \right],\ \
\Gamma=\left[ \begin{array}{ccccc}
0 & \nu_{21} & \nu_{31} & \dots & \nu_{n1} \\
\nu_{12} & 0 & \nu_{32} & \dots & \nu_{n2}\\
\nu_{13} & \nu_{23} & 0 & \dots & \nu_{n3}\\
\vdots & \vdots & \vdots & {} & \vdots \\
\nu_{1n} & \nu_{2n} & \nu_{3n} & \dots & 0\\
\end{array} \right],$$
$\mu_{ji}, \nu_{ji},\ i,j=\overline{1,n},\ i\ne j$ are defined in \eqref{munu},
and the upper index ${}^T$denotes the transpose of a vector or matrix.
\label{secondCriterion}
\end{criterion}

\noindent{\it Remark}. Notice that $\Delta$ is symmetric, i.e.\ $\Delta^T=\Delta$, whereas $\Gamma$ is skew-symmetric, i.e.\ $\Gamma^T=-\Gamma$. In the
next section we will see that both $\Delta$ and $\Gamma$ are circulant
matrices, i.e.\ each row vector is rotated one element to the right relative
to the previous (upper) row vector.


\section{Regular polygons}

In this section we will study the case of regular polygons and prove that the curved
$n$-body problem, with $n\ge 3$, admits regular polygonal homographic orbits if and only if all the masses are equal. This result is valid only for $n\ge 4$ in the Euclidean case,
where it is known as the Perko-Walter-Elmabsout theorem. The classical Lagrangian orbits, given by the dynamic equilateral triangle, allow non-equal masses, a situation that seems to be specific to the Euclidean space, most likely because some symmetries are lost if the curvature is not zero (see also \cite{Diacu0}). To emphasize certain details of our result, we will separately consider the direct and converse components of this theorem.

\begin{theorem}
Consider the curved $n$-body problem, $n\ge 3$, given by system \eqref{second}.
If $n$ bodies of equal masses, $m:=m_1=m_2=\dots=m_n$, lie initially at the vertices of a regular $n$-gon parallel with the $xy$ plane, then there is a class of initial velocities for
which the corresponding solutions are homographic. These orbits also satisfy the equalities $\gamma_1=\gamma_2=\dots=\gamma_n=0$.  
\label{eqmasses}
\end{theorem}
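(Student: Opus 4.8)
The plan is to verify directly that the hypotheses of Criterion~\ref{EandU} hold for the regular $n$-gon with equal masses, using the rotational symmetry of the configuration to collapse all the sums. First I would fix the standard parametrization: after relabelling and an irrelevant overall phase, take $\alpha_j=2\pi(j-1)/n$, so that $\alpha_j-\alpha_i=2\pi(j-i)/n$ depends only on the residue $k:=j-i\ (\mathrm{mod}\ n)$. Then the quantities in \eqref{sin-cos}, namely $c_{ji}=1-\cos(2\pi k/n)=:c_k$ and $s_{ji}=\sin(2\pi k/n)=:s_k$, and hence $\mu_{ji}=:\mu_k$ and $\nu_{ji}=:\nu_k$ from \eqref{munu}, are functions of $k$ alone; this is exactly the circulant structure announced in the Remark after Criterion~\ref{secondCriterion}. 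As $j$ runs over $\{1,\dots,n\}\setminus\{i\}$ the residue $k$ runs over $\{1,\dots,n-1\}$, so with $m_1=\dots=m_n=m$ one obtains $\delta_i=m\sum_{k=1}^{n-1}\mu_k$ and $\gamma_i=m\sum_{k=1}^{n-1}\nu_k$, both \emph{independent of $i$}. In particular $\delta_1=\dots=\delta_n$ and $\gamma_1=\dots=\gamma_n$, which is precisely condition \eqref{deltagamma}.

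Next I would compute $\gamma_i$ explicitly. The involution $k\mapsto n-k$ on $\{1,\dots,n-1\}$ fixes $c_k$ (cosine is even and $2\pi$-periodic) but sends $s_k\mapsto -s_k$; hence $\nu_{n-k}=-\nu_k$, and the terms of $\sum_{k=1}^{n-1}\nu_k$ cancel in pairs $\{k,n-k\}$. When $n$ is even the unpaired middle term $k=n/2$ carries $s_{n/2}=\sin\pi=0$ and so vanishes as well. Therefore $\gamma_1=\dots=\gamma_n=0$, which is the second assertion of the theorem; it also shows that \eqref{omega} degenerates to $r\ddot\omega+2\dot r\dot\omega=0$, i.e.\ $\frac{d}{dt}(r^2\dot\omega)=0$, so the angular behaviour is governed by a conservation law.

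Finally I would assemble the conclusion. Since \eqref{deltagamma} now holds \emph{identically in $r$}, the construction carried out inside the proof of Criterion~\ref{EandU} applies: for any non-singular initial data $(r(0),\dot r(0),\omega(0),\dot\omega(0))$ the reduced system \eqref{r}--\eqref{omega} has a unique analytic solution $(r(t),\omega(t))$, and substituting it into \eqref{polysol} yields a genuine solution of \eqref{second}, hence a polygonal homographic orbit. The ``class of initial velocities'' in the statement is then the two-parameter family obtained by differentiating \eqref{polysol} at $t=0$, i.e.\ the velocity vectors with $\dot x_i=\dot r_0\cos(\omega_0+\alpha_i)-r_0\dot\omega_0\sin(\omega_0+\alpha_i)$ and analogously for $\dot y_i,\dot z_i$, with $(\dot r_0,\dot\omega_0)$ arbitrary.

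There is no genuine obstacle here: the computational heart is merely the $k\mapsto n-k$ symmetry that makes $\delta_i,\gamma_i$ constant and forces $\gamma_i=0$. The only point needing a little care is the non-singularity of the initial configuration — one should require $c_k\,\kappa r^2(0)\ne 2$ for all $k$, which for $\kappa>0$ and $n$ even excludes the great-circle configuration in which two bodies are antipodal — after which the orbit extends globally, or until it reaches a singular configuration (as in a homothetic orbit ending in total collision), exactly as discussed in the proof of Criterion~\ref{EandU}.
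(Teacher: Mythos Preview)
Your proposal is correct and follows essentially the same approach as the paper: both exploit the fact that for the regular $n$-gon the quantities $\mu_{ji},\nu_{ji}$ depend only on $j-i\ (\mathrm{mod}\ n)$, so the sums $\delta_i,\gamma_i$ are independent of $i$, and then use the pairing $k\leftrightarrow n-k$ (with the middle term vanishing when $n$ is even) to show each $\gamma_i=0$. The paper merely displays the resulting circulant matrices $\Delta,\Gamma$ explicitly before drawing the same conclusions, whereas you encode this structure directly via the residue $k$; the mathematical content is identical.
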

\begin{proof}
Let the dynamical $n$-gon be regular and, for every time instant, lie in a plane parallel with the $xy$ plane. We will show that $\delta_1=\delta_2=\dots=\delta_n$ and $\gamma_1=\gamma_2=\dots=\gamma_n$, identities which, by Criterion \ref{EandU}, assure the existence of the desired homographic solution.

Without loss of generality, we assume that $\alpha_i, i=\overline{1,n},$ the constants which provide the representation \eqref{polysol} of the homographic orbits, are 
$$\alpha_1=0, \ \alpha_2={2\pi/n},\ \alpha_3={4\pi/n},\dots, \alpha_n={2(n-1)\pi/n}.$$
Then the differences $\alpha_j-\alpha_i$, $i,j=\overline{1,n},$ with $i\ne j$, can take only the values 
$$\pm 2\pi/n,\ \pm{4\pi}/{n},\dots,\pm{2(n-1)\pi/n}.$$
Consequently, these angles fully determine the constants $s_{ij}$ and $c_{ij}$ defined
in \eqref{sin-cos}.

We will first study the matrix $\Delta$. Let $s=[n/2]$ denote the integer part
of $n/2$. Then, for $n$ odd, the matrix $\Delta$ takes the circulant form  
$$
\tiny
\left[ 
\begin{array}{ccccccccccccc}
0 & a_1 & a_2 & \dots & a_{s-1} & a_s & a_s & a_{s-1} & \dots & a_3 & a_2 & a_1\\
a_1 & 0 & a_1 & \dots & a_{s-2} & a_{s-1} & a_s & a_s& \dots & a_4 & a_3 & a_2\\
a_2 & a_1 & 0 & \dots & a_{s-3} & a_{s-2} & a_{s-1} & a_s& \dots & a_5 & a_4 & a_3\\
\vdots & \vdots & \vdots & {} & \vdots & \vdots & \vdots & \vdots & {} & \vdots & \vdots & \vdots\\
a_{s-1} & a_{s-2} & a_{s-3} & \dots & 0 & a_1 & a_2 & a_3& \dots & a_{s-1} & a_s & a_s\\
a_s & a_{s-1} & a_{s-2} & \dots & a_1 & 0 & a_1 & a_2& \dots & a_{s-2} & a_{s-1} & a_s\\
a_s & a_s & a_{s-1} & \dots & a_2 & a_1 & 0 & a_1& \dots & a_{s-3} & a_{s-2} & a_{s-1}\\
a_{s-1} & a_s & a_s & \dots & a_3 & a_2 & a_1 & 0 & \dots & a_{s-4} & a_{s-3} & a_{s-2}\\
\vdots & \vdots & \vdots & {} & \vdots & \vdots & \vdots & \vdots & {} & \vdots & \vdots & \vdots\\
a_3 & a_4 & a_5 & \dots & a_{s-1} & a_{s-2} & a_{s-3} & a_{s-4} & \dots & 0 & a_1 & a_2\\
a_2 & a_3 & a_4 & \dots & a_s & a_{s-1} & a_{s-2} & a_{s-3} & \dots & a_1 & 0 & a_1\\
a_1 & a_2 & a_3 & \dots & a_s & a_s & a_{s-1} & a_{s-2} & \dots & a_2 & a_1 & 0\\
\end{array} 
\right],
$$
whereas for $n$ even, $\Delta$ becomes the circulant matrix
$$
\tiny
\left[ 
\begin{array}{cccccccccccc}
0 & a_1 & a_2 & \dots & a_{s-1} & a_s & a_{s-1} & \dots & a_3 & a_2 & a_1\\
a_1 & 0 & a_1 & \dots & a_{s-2} & a_{s-1} & a_s & \dots & a_4 & a_3 & a_2\\
a_2 & a_1 & 0 & \dots & a_{s-3} & a_{s-2} & a_{s-1} & \dots & a_5 & a_4 & a_3\\
\vdots & \vdots & \vdots & {} & \vdots & \vdots & \vdots & {} & \vdots & \vdots & \vdots\\
a_{s-1} & a_{s-2} & a_{s-3} & \dots & 0 & a_1 & a_2 & \dots & a_{s-2} & a_{s-1} & a_s\\
a_s & a_{s-1} & a_{s-2} & \dots & a_1 & 0 & a_1 & \dots & a_{s-3} & a_{s-2} & a_{s-1}\\
a_{s-1} & a_s & a_{s-1} & \dots & a_2 & a_1 & 0 & \dots & a_{s-4} & a_{s-3} & a_{s-2}\\
\vdots & \vdots & \vdots & {} & \vdots & \vdots & \vdots & {} & \vdots & \vdots & \vdots\\
a_3 & a_4 & a_5 & \dots & a_{s-2} & a_{s-3} & a_{s-4} & \dots & 0 & a_1 & a_2\\
a_2 & a_3 & a_4 & \dots & a_{s-1} & a_{s-2} & a_{s-3} & \dots & a_1 & 0 & a_1\\
a_1 & a_2 & a_3 & \dots & a_s & a_{s-1} & a_{s-2} & \dots & a_2 & a_1 & 0\\
\end{array} 
\right],
$$
where $a_1,a_2,\dots,a_s>0$ represent the $\mu_{ji}$ functions in their corresponding positions. Notice that in the $i$th row, the functions $\mu_{ji}$ with
$|i-j|=s$ and $|i-j|=n-s$, $s=\overline{1,[n/2]}$, are equal, and they are independent of $i$. But in every row of $\Delta$, there are exactly two functions $\mu_{ji}$ with $|i-j|=s$ or $|i-j|=n-s$, $s=\overline{1,[n/2]}$. For $n$ even, the single $\mu_{ji}$ term not captured in the above description corresponds to $\alpha_j-\alpha_i=\pi$ and is of the form $a_s=1/[4(1-\kappa r^2)^{3/2}]$, so it is also independent of $i$. Consequently each row of $\Delta$ contains the same elements, only differently ordered. Since all the masses are equal, we can conclude that $\delta_1=\delta_2=\dots=\delta_n$.

For $n$ odd, the matrix $\Gamma$ is circulant,
$$
\tiny
\left[ 
\begin{array}{ccccccccccccc}
0 & b_1 & b_2 & \dots & b_{s-1} & b_s & -b_s & -b_{s-1} & \dots & -b_3 & -b_2 & -b_1\\
-b_1 & 0 & b_1 & \dots & b_{s-2} & b_{s-1} & b_s & -b_s& \dots & -b_4 & -b_3 & -b_2\\
-b_2 & -b_1 & 0 & \dots & b_{s-3} & b_{s-2} & b_{s-1} & b_s& \dots & -b_5 & -b_4 & -b_3\\
\vdots & \vdots & \vdots & {} & \vdots & \vdots & \vdots & \vdots & {} & \vdots & \vdots & \vdots\\
-b_{s-1} & -b_{s-2} & -b_{s-3} & \dots & 0 & b_1 & b_2 & b_3& \dots & b_{s-1} & b_s & -b_s\\
-b_s & -b_{s-1} & -b_{s-2} & \dots & -b_1 & 0 & b_1 & b_2& \dots & b_{s-2} & b_{s-1} & b_s\\
b_s & -b_s & -b_{s-1} & \dots & -b_2 & -b_1 & 0 & b_1& \dots & b_{s-3} & b_{s-2} & b_{s-1}\\
b_{s-1} & b_s & -b_s & \dots & -b_3 & -b_2 & -b_1 & 0 & \dots & b_{s-4} & b_{s-3} & b_{s-2}\\
\vdots & \vdots & \vdots & {} & \vdots & \vdots & \vdots & \vdots & {} & \vdots & \vdots & \vdots\\
b_3 & b_4 & b_5 & \dots & -b_{s-1} & -b_{s-2} & -b_{s-3} & -b_{s-4} & \dots & 0 & b_1 & b_2\\
b_2 & b_3 & b_4 & \dots & -b_s & -b_{s-1} & -b_{s-2} & -b_{s-3} & \dots & -b_1 & 0 & b_1\\
b_1 & b_2 & b_3 & \dots & b_s & -b_s & -b_{s-1} & -b_{s-2} & \dots & -b_2 & -b_1 & 0\\
\end{array} 
\right],
$$
whereas for $n$ even, $\Gamma$ takes the circulant form
$$
\tiny
\left[
 \begin{array}{ccccccccccccc}
0 & b_1 & b_2 & \dots & b_{s-1} & 0 & -b_{s-1} & \dots & -b_3 & -b_2 & -b_1\\
-b_1 & 0 & b_1 & \dots & b_{s-2} & b_{s-1} & 0 & \dots & -b_4 & -b_3 & -b_2\\
-b_2 & -b_1 & 0 & \dots & b_{s-3} & b_{s-2} & b_{s-1} & \dots & -b_5 & -b_4 & -b_3\\
\vdots & \vdots & \vdots & {} & \vdots & \vdots & \vdots & {} & \vdots & \vdots & \vdots\\
-b_{s-1} & -b_{s-2} & -b_{s-3} & \dots & 0 & b_1 & b_2 & \dots & b_{s-2} & b_{s-1} & 0\\
0 & -b_{s-1} & -b_{s-2} & \dots & -b_1 & 0 & b_1 & \dots & b_{s-3} & b_{s-2} & b_{s-1}\\
b_{s-1} & 0 & -b_{s-1} & \dots & -b_2 & -b_1 & 0 & \dots & b_{s-4} & b_{s-3} & b_{s-2}\\
\vdots & \vdots & \vdots & {} & \vdots & \vdots & \vdots & {} & \vdots & \vdots & \vdots\\
b_3 & b_4 & b_5 & \dots & -b_{s-2} & -b_{s-3} & -b_{s-4} & \dots & 0 & b_1 & b_2\\
b_2 & b_3 & b_4 & \dots & -b_{s-1} & -b_{s-2} & -b_{s-3} & \dots & -b_1 & 0 & b_1\\
b_1 & b_2 & b_3 & \dots & 0 & -b_{s-1} & -b_{s-2} & \dots & -b_2 & -b_1 & 0\\
\end{array} 
\right],
$$
where $b_1,b_2,\dots,b_s>0$ represent the corresponding $\nu_{ji}$ functions. Thus, for $\Gamma$, we have to slightly amend the arguments used for $\Delta$: the two functions $\nu_{ji}$ with $|i-j|=s$ or $|i-j|=n-s$, which are independent of $i$ and occur in every row, are equal only in absolute value; they have opposite signs because $s_{ji}:=\sin(\alpha_j-\alpha_i)$ is odd. For $n$ even, the extra term corresponding to $\alpha_j-\alpha_i=\pi$ is $b_s=0$. Therefore $\gamma_1=\gamma_2=\dots=\gamma_n=0$. By Criterion \ref{EandU}, we can now conclude that the dynamical regular $n$-gon having equal masses at its vertices is a homographic orbit of the curved $n$-body problem. This remark completes the proof.
\end{proof}

We can now state and prove the converse of Theorem \ref{eqmasses}.

\begin{theorem}
If the masses $m_1,\dots, m_n>0$, $n\ge 3$, form a polygonal homographic solution of the curved $n$-body problem given by equations \eqref{second}, such that the polygon is regular, then $m_1=m_2=\dots=m_n$.
\end{theorem}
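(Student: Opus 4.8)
The plan is to substitute the regular $n$-gon into Criterion~\ref{secondCriterion} and exploit the circulant structure of $\Delta$ and $\Gamma$ recorded just after it. Normalize the vertices by $\alpha_i=2(i-1)\pi/n$, $i=\overline{1,n}$, as in the proof of Theorem~\ref{eqmasses}; then $\Delta$ and $\Gamma$ are $n\times n$ circulant matrices with zero diagonal, $\Delta$ symmetric and $\Gamma$ skew-symmetric, whose generating rows consist of the numbers $\mu_{ji}$ and $\nu_{ji}$ from \eqref{munu} attached to $\alpha_j-\alpha_i=2\pi\ell/n$; write these as $f(\ell)$ and $g(\ell)$, so that $f(\ell)=f(n-\ell)>0$ while $g(\ell)=-g(n-\ell)$ has the sign of $\sin(2\pi\ell/n)$. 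By Criterion~\ref{secondCriterion}, the mass vector ${\bf m}$ satisfies $\Delta{\bf m}^T=\delta\,{\bf 1}^T$ and $\Gamma{\bf m}^T=\gamma\,{\bf 1}^T$ for some $\delta>0$ and $\gamma\in\mathbb{R}$, where ${\bf 1}=(1,\dots,1)$, and the goal is to force ${\bf m}\in\mathrm{span}\{{\bf 1}\}$.

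First I would show $\gamma=0$: since in a regular polygon the differences $\alpha_j-\alpha_i$ occur in $\pm$ pairs and $s_{ji}$ is odd, $\sum_i\nu_{ji}=0$ for every $j$, so $\sum_i\gamma_i=\sum_j m_j\sum_i\nu_{ji}=0$, and combined with $\gamma_1=\dots=\gamma_n$ this gives $\gamma=0$, i.e.\ $\Gamma{\bf m}^T={\bf 0}$. Next, diagonalize both circulant matrices in the common Fourier basis ${\bf v}_k=(1,\zeta^k,\dots,\zeta^{(n-1)k})$, $\zeta=e^{2\pi i/n}$, whose eigenvalues are $\lambda_k^\Delta=\sum_{\ell=1}^{n-1}f(\ell)\cos(2\pi\ell k/n)$ (real) and $\lambda_k^\Gamma=i\sum_{\ell=1}^{n-1}g(\ell)\sin(2\pi\ell k/n)$ (purely imaginary). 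Expanding ${\bf m}=\sum_{k=0}^{n-1}\widehat m_k{\bf v}_k$, the two linear conditions uncouple mode by mode: since ${\bf v}_0={\bf 1}$, for each $k=\overline{1,n-1}$ one obtains $\widehat m_k\lambda_k^\Delta=0$ and $\widehat m_k\lambda_k^\Gamma=0$. Hence $\widehat m_k=0$ whenever $\lambda_k^\Delta\ne0$ or $\lambda_k^\Gamma\ne0$, and if this disjunction holds for every $k=\overline{1,n-1}$ then ${\bf m}$ is a multiple of ${\bf 1}$, which is exactly $m_1=\dots=m_n$.

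Everything therefore rests on the claim that \emph{for each $k=\overline{1,n-1}$ the eigenvalues $\lambda_k^\Delta$ and $\lambda_k^\Gamma$ do not vanish simultaneously}, and this is the step I expect to be the main obstacle. For $n$ odd the cleanest route is to show $\lambda_k^\Gamma\ne0$ outright: grouping the $\pm$ pairs yields $\lambda_k^\Gamma=2i\sum_{\ell=1}^{(n-1)/2}g(\ell)\sin(2\pi\ell k/n)$, a sine sum with strictly positive weights $g(\ell)$, and one must argue this cannot cancel. For $n$ even the latitude term $\ell=n/2$ contributes $g(n/2)=0$ to $\Gamma$ but a positive amount to $\Delta$, so one has to argue with $\lambda_k^\Delta$ and $\lambda_k^\Gamma$ together, using the monotone dependence of $f(\ell)$ on $\ell$. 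I would also flag a genuine subtlety for $\kappa>0$: the two eigenvalues can vanish together at isolated radii — for $n=4$ the mode $k=2$ has $\lambda_2^\Gamma\equiv0$, and $\lambda_2^\Delta=0$ precisely when $\kappa r^2=2/3$ — so the argument above controls only the radii where the disjunction holds; since a genuinely deforming polygonal homographic orbit must satisfy \eqref{deltagamma} on a whole interval of values of $r$ while the bad radii form a discrete set, the conclusion $m_1=\dots=m_n$ still follows, and the borderline relative equilibria sitting on those exceptional circles would have to be examined separately, in the spirit of Sections 6 and 7.
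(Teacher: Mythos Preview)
Your route is genuinely different from the paper's and, in an important respect, more honest. The paper does \emph{not} use the Fourier/circulant diagonalization at all: it fixes a time, asserts that $\Delta$ is positive definite ``since all off-diagonal entries are positive,'' invokes Sylvester, concludes $\det\Delta\ne0$, and then Cramer gives uniqueness of the equal-mass solution of $\Delta{\bf m}^T={\bf d}^T$. That positive-definiteness step is simply false: a symmetric matrix with zero diagonal satisfies $e_i^T\Delta e_i=0$, so it can never be positive definite; and your own $n=4$ computation shows that $\lambda_2^\Delta=-2f(1)+f(2)$ is negative for small $\kappa r^2$ and vanishes at $\kappa r^2=2/3$, so $\Delta$ is indefinite and even singular at that radius. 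Thus the paper's argument, while shorter, rests on an incorrect lemma that your eigenvalue analysis exposes.

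Your own proof is therefore not merely ``a different decomposition'' but the one that actually confronts the real difficulty. The reduction to $\gamma=0$ is clean, and the Fourier decoupling correctly isolates the question to the non-simultaneous vanishing of $\lambda_k^\Delta$ and $\lambda_k^\Gamma$ for $k=\overline{1,n-1}$. Your treatment of genuinely varying $r$ is sound: the conditions \eqref{deltagamma} must hold on an interval of values of $r$, the exceptional radii are isolated zeros of real-analytic functions, so any nonzero Fourier mode is killed at all but finitely many instants and hence identically. What remains is exactly what you flag: the relative equilibria sitting on an exceptional circle. Your $n=4$, $\kappa r^2=2/3$ example is not a technicality but an actual family of regular-square relative equilibria with masses $(M,m,M,m)$, $M\ne m$, satisfying both $\delta_1=\dots=\delta_4$ and $\gamma_1=\dots=\gamma_4=0$; one checks this directly from \eqref{munu}. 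So the ``separate examination'' you call for is not optional, and the statement as written does not survive it without an extra hypothesis (e.g.\ $r$ nonconstant, or $\kappa<0$). In short: your approach is the right one, your main obstacle is real, and the counterexample you found shows that neither proof can close the gap for those exceptional $\kappa>0$ relative equilibria.
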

\begin{proof}
Notice first that, from the form of the matrix $\Gamma$ represented above (for both $n$ even and odd), if a dynamical regular $n$-gon parallel with the $xy$ plane is a homographic solution
of the curved $n$-body problem, then the masses must satisfy the system
$\Gamma{\bf m}^T={\bf 0}^T$. By Criterion \ref{secondCriterion}, the masses must also satisfy the system $\Delta{\bf m}^T={\bf d}^T$.

Since, in general, the functions $\mu_{ji}$ and $\nu_{ji}$ vary in time,
and the solutions $m_1,m_2,\dots, m_n$ of the systems $\Delta{\bf m}^T={\bf d}^T$
and $\Gamma{\bf m}^T={\bf 0}^T$ depend on $\mu_{ji}$ and $\nu_{ji}$, only
solutions for which the masses are constant and positive lead to homographic solutions.
Therefore if we fix an arbitrary time $t$ and show that the corresponding
systems with constant coefficients have solutions only when $m_1=m_2=\dots=m_n$,
then no other solutions are possible for those systems when $t$ varies.
By Theorem \ref{eqmasses}, we can then conclude that $m_1=m_2=\dots=m_n$
is the only case when the systems with variable coefficients have solutions.

So let us assume $t$ fixed and start with the system $\Delta{\bf m}^T={\bf d}^T$.
We already know from Theorem \ref{eqmasses} that this system has infinitely
many solutions, namely $m_1=m_2=\dots=m_n=\alpha$ for any $\alpha>0$,
where $\alpha$ is some function of $\delta>0$. But if we fix a value of $\delta$,
then $\alpha$ is also fixed. 

We will now show that $\det\Delta\ne 0$. To prove this fact, notice that, except for the diagonal elements, all the elements of $\Delta$ are positive. Therefore $\Delta$ is positive definite. Indeed, a simple computation shows that for any nonzero vector ${\bf z}=(z_1,z_2,\dots,z_n)\in{\mathbb R}^n$, ${\bf z}\Delta{\bf z}^T>0$. By Sylvester's criterion (see, e.g.,  \cite{Gilbert}), positive-definite matrices have positive determinants.

According to Cramer's rule, the linear system $\Delta{\bf m}^T={\bf d}^T$ has a unique
solution, which must be none else than $m_1=m_2=\dots=m_n=\alpha$, with $\alpha>0$
fixed. By Theorem \ref{eqmasses}, this solution also satisfies the system
$\Gamma{\bf m}^T={\bf 0}^T$. Since the choice of $\alpha$ depends on the choice
of $\delta$, and $\delta>0$ is arbitrarily fixed, we can draw the same conclusion
for any choice of $\delta$. Consequently a dynamical regular $n$-gon can be
a homographic solution of the curved $n$-body problem only if all masses are equal.
This remark completes the proof.
\end{proof}


\section{Non-geodesic scalene triangles}

It is natural to ask whether irregular polygons could form homographic
orbits. The conditions of Criterion \ref{EandU} for the existence and
uniqueness of dynamical $n$-gons suggest that, in general, this is not the case. Indeed, system \eqref{deltagamma} has $2n-2$ linear equations and $n$
unknowns: $m_1,m_2,\dots, m_n$. Even the case $n=3$ leads to a linear 
system of $4$ equations with $3$ unknowns, which is still unlikely to have
solutions in general.  

We will next prove that homographic orbits of the curved $3$-body problem cannot 
exist for $|z|\ne 0$ if the triangle is not equilateral. In other words, the symmetries 
of the equilateral triangle (and consequently the equality of the masses) are a 
necessary condition for the existence of such orbits. The restriction 
$|z|\ne 0$ is necessary only for  $\kappa>0$ (being automatically satisfied for $\kappa<0$) because equations \eqref{r} do not allow homographic orbits to pass 
through the equator of the sphere, since the function $1-\kappa r^2$, which appears as a denominator, cancels in that case. However, as we will see in the next section, non-equilateral relative equilibria moving along the equator do exist.

We can now state and prove the following result.

\begin{theorem}
Consider the curved $3$-body problem, given by equations \eqref{second} with
$n=3$ and masses $m_1,m_2,m_3>0$. These equations admit no homographic orbits  given by scalene non-equilateral triangles for $\kappa<0$. For $\kappa>0$, they don't admit such solutions either if the bodies stay away from the equator $z=0$.  
\end{theorem}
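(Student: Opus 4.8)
\medskip
\noindent\emph{Proof proposal.} The plan is to test a solution of the form \eqref{polysol} against Criterion~\ref{EandU}, first collapsing the conditions $\gamma_1=\gamma_2=\gamma_3$ to $\gamma_1=\gamma_2=\gamma_3=0$, then solving the resulting two linear systems explicitly and confronting them, and finally showing the two can agree only for an equilateral triangle. Step one is cheap: since $\nu_{ij}=-\nu_{ji}$ one has the identity $\sum_{i=1}^{3}m_i\gamma_i=\sum_i\sum_{j\ne i}m_im_j\nu_{ji}=\sum_{i<j}m_im_j(\nu_{ji}+\nu_{ij})=0$, so $\gamma_1=\gamma_2=\gamma_3$ together with $m_1+m_2+m_3>0$ forces the common value to vanish. (For $\kappa>0$, Criterion~\ref{EandU} is applicable here precisely because we remain off the equator: its proof multiplies \eqref{r} by $r^{2}/(1-\kappa r^{2})$, and $1-\kappa r^{2}=0$ is exactly $z=0$; for $\kappa<0$ one has $1-\kappa r^{2}>0$ identically. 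This is the only place $z\ne0$ enters.) Thus a scalene homographic orbit must satisfy $\delta_1=\delta_2=\delta_3$ and $\gamma_1=\gamma_2=\gamma_3=0$.

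Normalize $\alpha_1=0$ and let $\phi_1=\alpha_2$, $\phi_2=\alpha_3-\alpha_2$, $\phi_3=2\pi-\alpha_3$ be the three arcs, so $\phi_i>0$, $\phi_1+\phi_2+\phi_3=2\pi$, and ``scalene'' means the $\phi_i$ are pairwise distinct. Using $\alpha_3-\alpha_1=2\pi-\phi_3$ one checks that for $n=3$ the skew-symmetric matrix $\Gamma$ of Criterion~\ref{secondCriterion} has off-diagonal entries $\pm\nu_1,\pm\nu_2,\pm\nu_3$, where $\nu_i$ denotes the value of $\nu$ on the arc $\phi_i$; hence $\ker\Gamma=\mathbb R\,(\nu_2,\nu_3,\nu_1)$ is one-dimensional, and positivity of the masses forces these three numbers to share a sign. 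Since $\sum\phi_i=2\pi$ excludes all three arcs being larger than $\pi$, and a vanishing $\nu_i$ would kill a mass, every $\phi_i$ must be smaller than $\pi$, and $(m_1,m_2,m_3)=\lambda(\nu_2,\nu_3,\nu_1)$ with $\lambda>0$. Writing $t_i:=\cot(\phi_i/2)>0$ and $\mu_i:=\mu(\phi_i)$, so $\nu_i=t_i\mu_i$, substitution of this mass vector into $\delta_1=\delta_2$ and $\delta_1=\delta_3$ and cancellation of the positive common factors yield $\mu_3(t_1+t_3)=\mu_2(t_1+t_2)$ and $\mu_1(t_1+t_3)=\mu_2(t_2+t_3)$, i.e.\ $\mu_1:\mu_2:\mu_3=(t_2+t_3):(t_1+t_3):(t_1+t_2)$; equivalently $\mu_i=C(t_j+t_k)$ for $\{i,j,k\}=\{1,2,3\}$ with one constant $C>0$. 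Finally, the half-angle formulas turn $\mu$ into the closed form $\mu_i=F(t_i)$ with $F(t)=(1+t^2)^2\big/\big[4(1+t^2-\kappa r^2)^{3/2}\big]$, while $\phi_1+\phi_2+\phi_3=2\pi$ with each $\phi_i\in(0,\pi)$ is exactly $t_1t_2+t_2t_3+t_3t_1=1$.

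Everything is now reduced to: $t_1,t_2,t_3>0$, $t_1t_2+t_2t_3+t_3t_1=1$, and $F(t_i)=C(t_j+t_k)$ — and I must show this forces $t_1=t_2=t_3=1/\sqrt3$ (the equilateral triangle), contradicting scaleneness. Since $t_j+t_k=(t_1+t_2+t_3)-t_i$ strictly decreases in $t_i$, the relation $F(t_i)=C(t_j+t_k)$ says a larger $t_i$ must come with a smaller $\mu_i$. A short computation gives $F'(t)=t(1+t^2)(1+t^2-4\kappa r^2)\big/\big[4(1+t^2-\kappa r^2)^{5/2}\big]$. When $\kappa<0$, and also when $\kappa>0$ with $\kappa r^2\le\tfrac14$, the factor $1+t^2-4\kappa r^2$ is positive for all $t>0$, so $F$ is strictly increasing; then a larger $t_i$ comes with a larger $\mu_i$, which contradicts the preceding sentence unless the $t_i$ all coincide. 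This proves the theorem whenever $\kappa<0$, and whenever $\kappa>0$ with $\kappa r^2\le\tfrac14$.

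The remaining regime, $\kappa>0$ with $\kappa r^2>\tfrac14$ — the bodies close to (but not on) the equator, which is exactly the part of the phase space left open by the hypothesis $z\ne0$ — is the main obstacle. There $F$ is no longer monotone: it is $U$-shaped, strictly decreasing on $\big(0,\sqrt{4\kappa r^2-1}\,\big)$ and strictly increasing afterwards, with a single inflection point that one checks lies to the left of the minimum; so the monotonicity argument fails and convexity alone does not rule out a negatively sloped secant meeting the graph of $F$ three times. To finish, I would bring in the two remaining constraints at once — that the four points $(t_1,\mu_1),(t_2,\mu_2),(t_3,\mu_3),(t_1+t_2+t_3,0)$ are collinear and that $t_1t_2+t_2t_3+t_3t_1=1$ — and study the auxiliary function $t\mapsto F(t)\big/\big(t_1+t_2+t_3-t\big)$ on the admissible interval, showing it cannot take a common value at three distinct admissible points; after clearing the radical in $F$ this should come down to proving that an explicit low-degree polynomial has no relevant triple of positive roots. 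I expect this last step — an honest sign analysis rather than a soft argument — to be where the genuine work lies.
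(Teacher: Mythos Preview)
Your reduction is sound: the $\gamma=0$ step via the skew-symmetry identity $\sum_i m_i\gamma_i=0$ is cleaner than the paper's row elimination but lands in the same place, and your explicit solution of $\ker\Gamma$ followed by substitution into the $\delta$-equations to reach $F(t_i)=C(t_j+t_k)$ under $t_1t_2+t_2t_3+t_3t_1=1$ is a correct and complete translation of Criterion~\ref{EandU} for $n=3$. The gap is exactly where you put it: your monotonicity argument for $F$ settles $\kappa<0$ and $\kappa>0$ with $\kappa r^2\le\tfrac14$, but for $\kappa>0$ with $\tfrac14<\kappa r^2<1$ --- which the theorem must cover, since $z\ne0$ only excludes $\kappa r^2=1$ --- you have no argument, only a program. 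So the proof is incomplete as it stands.

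The paper's route is different in spirit. Rather than analysing $F$, it writes the compatibility of $\Delta\mathbf m=\mathbf d$ and $\Gamma\mathbf m=\mathbf 0$ as three ratio identities (the first being $(a-c)/b=(u-w)/v$, with $a=\mu_{12}$, $b=\mu_{13}$, $c=\mu_{23}$ and $u=\nu_{21}$, $v=\nu_{31}$, $w=\nu_{32}$), combines these with the sign dichotomy on $(u,v,w)$ forced by positivity of the masses, and then runs a short order-and-sign case check against a WLOG ordering $0<a<b<c$. Because this is a pure sign argument that never touches the explicit formula for $\mu$ or $\nu$, it is uniform in $r$ and sidesteps the regime split you hit. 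A caution if you try to reproduce it: the paper obtains its second and third ratio identities by the naive letter cycle $(a,b,c)\to(b,c,a)$, $(u,v,w)\to(v,w,u)$, but a genuine cyclic permutation of the body labels sends $(u,v,w)\to(w,-u,-v)$ because $\Gamma$ is skew-symmetric. The correct companions of $(a-c)/b=(u-w)/v$ are $(b-a)/c=(u+v)/w$ and $(c-b)/a=-(v+w)/u$ (equivalently, your relation $\mu_i=C(t_j+t_k)$); with these, the paper's four-case sign check as printed no longer closes immediately --- for instance the equilateral triangle already violates the paper's displayed $(b-a)/c=(v-u)/w$. So the paper's strategy is worth pursuing as an alternative to your function-analytic approach, but you will need to redo the case analysis from the correct relations rather than copying it.
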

\begin{proof}
Notice first that any homographic orbit of the $3$-body problem must correspond to an acute triangle. Otherwise, at every time instant, there is a plane containing the $z$
axis such that all three bodies are on one side of the plane (at most two of
them in the plane). Therefore the total angular momentum vector at that time instant cannot be zero or parallel with the $z$ axis. But after rotating by $\pi$ radians, the bodies reach a position on the other side of the plane, and the angular momentum has
certainly a different direction, in violation of the angular-momentum integrals \eqref{ang}.

For the masses $m_1,m_2,m_3>0$, the conditions in Criterion \ref{secondCriterion} can be written as
\begin{equation}
\Delta{\bf m}^T={\bf d}^T\ \ {\rm and}\ \ \Gamma{\bf m}^T={\bf e}^T,
\label{3dimsys}
\end{equation}
$${\bf m}=(m_1,m_2,m_3),\ \ {\bf d}=(\delta,\delta,\delta), \ \ {\bf e}=(
\gamma,\gamma,\gamma),$$
$$\Delta=\left[ \begin{array}{ccc}
0 & a & b\\
a & 0 & c\\
b & c & 0
\end{array} \right],\ \
\Gamma=\left[ \begin{array}{ccc}
0 & u & v\\
-u & 0 & w\\
-v & -w & 0
\end{array} \right],
$$
$$a:=\mu_{21}=\mu_{12},\ b:=\mu_{31}=\mu_{13},\ c:=\mu_{32}=\mu_{23},$$
$$u:=\nu_{21}=-\nu_{12},\ v:=\nu_{31}=-\nu_{13},\ w:=\nu_{32}=-\nu_{23},$$
and $\mu_{ji}, \nu_{ji},\ i,j=1,2,3$, $i\ne j$, as defined in \eqref{munu}.

Multiplying the first equation of the system $\Gamma{\bf m}^T={\bf e}^T$ by 
$-w$, the second by $v$, and adding them, we obtain the equation
$$
-uvm_1-uwm_2=\gamma(v-w).
$$
Multiplying the third equation of that system by $-u$ and adding the
above equation to it, we get the condition $\gamma(v-u-w)=0$. Circular
permutations lead us to the system of conditions
$$\gamma(v-u-w)=\gamma(w-v-u)=\gamma(u-w-v)=0.$$
This system is satisfied either if $\gamma=0$ or when
$$
v-u-w=w-v-u=u-w-v=0.
$$
But the above equations imply that $u=v=w=0$, which is a solution with
no dynamical consequences, so necessarily $\gamma=0$. Consequently
the linear system $\Gamma{\bf m}^T={\bf e}^T$  reduces to
$$
um_2+vm_3=-um_1+wm_3=-vm_1-wm_2=0.
$$
Since $m_1,m_2,m_3>0$, the signs of $u,v$, and $w$ must be such that
\begin{equation}
(i) \  u,w>0\ {\rm and} \ v<0\ \ \ {\rm or}\ \ \ (ii)\ u,w<0\ {\rm and} \ v>0.
\label{signs}
\end{equation}

Notice that from the first equation of each of the linear system in \eqref{3dimsys}, as well as from the third equation of both systems, we can, respectively, conclude that 
$$m_2={\frac{\delta}{b}\over\frac{a}{b}-\frac{u}{v}} \ \ {\rm and} \ \
m_2={\frac{\delta}{b}\over\frac{c}{b}-\frac{w}{v}}.$$
Then
$
\frac{a-c}{b}=\frac{u-w}{v},
$
a condition that is independent of $\delta>0$.
Two similar conditions follow by circular permutations. Therefore the systems in \eqref{3dimsys} have the same set of solutions if
\begin{equation}
\frac{a-c}{b}=\frac{u-w}{v},\ \ \ \ \frac{b-a}{c}=\frac{v-u}{w},\ \ \ \ \frac{c-b}{a}=\frac{w-v}{u}.
\label{condi}
\end{equation}
We will further prove that these conditions are simultaneously satisfied only when 
$a=b=c$ and $u=v=w,$ a set of solutions  
which corresponds to equilateral triangles and, as a result, to equal
masses.

Let us first assume that the acute triangle is isosceles and not equilateral. Then, without loss of generality, we can choose $a=c\ne b$ and $u=w\ne v$. The first equation in \eqref{condi} is satisfied, whereas the other two take the
same form, namely $\frac{a-b}{a}=\frac{u-v}{u}$, which implies that $av=bu$. 
Since $a,b>0$, it follows that $u$ and $v$ have the same sign, a conclusion which contradicts \eqref{signs}.

The last case to consider is that of the scalene acute triangles that are not isosceles. Without loss of generality, we assume that $0<a<b<c$. Then 
$$
\frac{a-c}{b}<0,\ \ \frac{b-a}{c}>0,\ \ \frac{c-b}{a}>0,
$$
inequalities which, via \eqref{condi}, imply that
\begin{equation}
\frac{u-w}{v}<0,\ \ \frac{v-u}{w}>0, \ \ \frac{w-v}{u}>0.
\label{contra}
\end{equation}
But relations \eqref{signs} provide only four possible ways to order $u,v,$ and $w$, namely: (1) $v<0<u<w$, (2) $v<0<w<u$, (3) $u<w<0<v$, and (4) $w<u<0<v$.
It is easy to see that, in each case, the order and the signs of these constants 
contradict at least one of the inequalities given in \eqref{contra}, so scalene non-isosceles triangles cannot form homographic orbits either. This remark completes the proof.
\end{proof}


\section{Geodesic scalene triangles}

In this section we restrict our study to the case $\kappa>0$ and consider a situation that was not captured in the previous section, namely the relative equilibria that rotate on great circles of a sphere generated from fixed points of the equations of motion. Such orbits do not exist for $\kappa<0$ because, as shown in
\cite{Diacu1} and \cite{Diacu001}, there are no fixed points when the bodies move in hyperbolic space.  

Without loss of generality, we will analyze these orbits when the great circle is the equator, $z=0$, and the rotation takes place around the $z$ axis. This case is not captured by the system of differential equations given in equations \eqref{r} and \eqref{omega} because one denominator cancels when the bodies reach the equator. 
When $r$ is constant, i.e.\ the solution is just a relative equilibrium, instead of a
homographic orbit with both rotation and expansion and/or contraction,
the motion on the equator can be studied separately. In this case, no cancelling 
denominators show up when the motion takes place on the great circle $z=0$. Nevertheless, no two bodies can be antipodal because the corresponding configuration is a singularity of system \eqref{second} (see \cite{Diacu1} or \cite{Diacu002} for more details), and therefore the motion doesn't exist. But acute triangles moving on the
equator don't have antipodal bodies at their vertices, so singularities do not affect them.

We will first focus on fixed points lying on the equator, and show why
for every acute triangle there exist masses that provide a fixed point for the equations of motion if the bodies are placed at the vertices of the triangle. Then we explain how  relative equilibria can be generated from fixed points. By providing a large class of counterexamples, we also prove that not any three masses generate fixed points, 
and consequently not any three masses can form relative equilibria that move along the equator.  

We can now state and prove the following result.

\begin{theorem}
For any acute triangle inscribed in a great circle of the sphere $S_\kappa^2$, 
there exist bodies of masses $m_1, m_2, m_3>0$ that can be placed at the
vertices of the triangle such that they form a fixed point of system \eqref{second}
for $n=3$ and $\kappa>0$, i.e.\ for the equations of motion of the curved $3$-body problem in the case of positive curvature. 
\label{scalene}
\end{theorem}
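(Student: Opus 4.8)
The goal is to show that for every acute triangle inscribed in a great circle of $\mathbf S^2_\kappa$, there exist positive masses making the three bodies a fixed point of \eqref{second}. The plan is to set $z_i=0$, write the bodies on the equator as ${\bf q}_i=\kappa^{-1/2}(\cos\theta_i,\sin\theta_i,0)$, and impose that the right-hand side of \eqref{second} vanishes with $\dot{\bf q}_i=0$. Since the velocity terms drop out, a fixed point is exactly a critical configuration of the force function restricted to the equator, and the equation $\ddot{\bf q}_i=0$ becomes a tangential-force balance: for each $i$, the tangential component of $\sum_{j\ne i} m_j|\kappa|^{3/2}[{\bf q}_j-(\kappa{\bf q}_i\odot{\bf q}_j){\bf q}_i]/[\,1-(\kappa{\bf q}_i\odot{\bf q}_j)^2\,]^{3/2}$ must be zero. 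On a great circle this is a one-dimensional condition per body, so we get three scalar equations in the three unknown masses $m_1,m_2,m_3$.

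**Key steps, in order.** First I would parametrize: with $\alpha_i$ the vertex angles as in \eqref{polysol} and $r=\kappa^{-1/2}$ (so $1-\kappa r^2=0$, i.e.\ the equator), compute $\kappa{\bf q}_i\odot{\bf q}_j=\cos(\alpha_j-\alpha_i)$, hence $c_{ji}=1-\cos(\alpha_j-\alpha_i)$, $s_{ji}=\sin(\alpha_j-\alpha_i)$, and observe that the relevant denominator $[1-\cos^2(\alpha_j-\alpha_i)]^{3/2}=[s_{ji}^2]^{3/2}=|s_{ji}|^3$ stays nonzero precisely because no two bodies are antipodal (which, as the text notes, holds for acute triangles). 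Second, I would extract the tangential component of the force on body $i$: the unit tangent at ${\bf q}_i$ along the equator is ${\bf t}_i=(-\sin\alpha_i,\cos\alpha_i,0)$, and ${\bf q}_j\cdot{\bf t}_i=\sin(\alpha_j-\alpha_i)/\sqrt\kappa$, while the ${\bf q}_i$-term is orthogonal to ${\bf t}_i$; this gives the balance equations $\sum_{j\ne i} m_j\,\nu_{ji}=0$ for $i=1,2,3$, where $\nu_{ji}=s_{ji}/(c_{ji}^{3/2}(2-c_{ji}\kappa r^2)^{3/2})$ evaluated at $r=\kappa^{-1/2}$ — i.e.\ exactly the condition $\gamma_1=\gamma_2=\gamma_3=0$ from Criterion \ref{EandU}, which is reassuring since a relative equilibrium on the equator degenerates to a fixed point only with this $\gamma=0$ forcing. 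Third, with $u:=\nu_{21}$, $v:=\nu_{31}$, $w:=\nu_{32}$ as in Section 6, the system is $um_2+vm_3=0$, $-um_1+wm_3=0$, $-vm_1-wm_2=0$: this is $\Gamma{\bf m}^T={\bf 0}$ with $\det\Gamma=0$ (skew-symmetric, odd size), so there is always a one-dimensional solution space, and the task reduces to checking that it contains a strictly positive vector. Fourth, for an acute triangle I would verify the sign pattern: ordering the vertices and using that all three arc-separations $\alpha_j-\alpha_i$ lie strictly in a half-period (equivalently, all arcs $<\pi$ — which is the acuteness condition on the inscribed triangle), one gets exactly the sign configuration \eqref{signs}, namely (up to relabelling) $u,w>0$ and $v<0$; then $m_1:m_2:m_3 = (-vw):(uv\cdot(-1))\cdots$ — more precisely the null vector of $\Gamma$ is proportional to $(w,-v,u)$ (the "vector of $2\times2$ cofactors"), and with $u,w>0$, $v<0$ all three entries are positive. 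Rescaling gives admissible $m_1,m_2,m_3>0$.

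**Main obstacle.** The genuinely substantive point is the sign analysis in the fourth step: one must show that \emph{acuteness of the inscribed triangle} forces precisely the sign pattern \eqref{signs} on $(u,v,w)$, so that the (automatically existing) kernel vector of the skew-symmetric matrix $\Gamma$ can be chosen with all positive components. This is where the geometric hypothesis is actually used, and it requires a careful bookkeeping of which vertex angle corresponds to which arc and the fact that a chord subtends an acute inscribed angle iff the complementary arc exceeds $\pi$ — equivalently each side-arc is less than $\pi$ — together with the resulting signs of $s_{ji}=\sin(\alpha_j-\alpha_i)$. Everything else (the force computation, the degeneracy $\det\Gamma=0$, the reduction to a $3\times3$ skew system) is routine linear algebra and the kinematics already set up in Sections 2–4; the remaining verification that the positivity can be simultaneously arranged with a \emph{single} scaling constant is immediate once the sign pattern is pinned down.
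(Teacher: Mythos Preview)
Your proposal is correct and takes a genuinely cleaner route than the paper's proof. The paper works in Cartesian coordinates on the equator: it writes the fixed-point condition as a $6\times 3$ linear system in the masses, reduces it to a $3\times 3$ system $A\mathbf m^T=\mathbf 0$ by showing pairwise linear dependence of the $x$- and $y$-equations, and then proves $\det A=0$ by a direct coordinate computation (setting $x_1=0$ without loss of generality and expanding). You instead parametrize by angles, project onto the tangent at each body, and recognize the resulting $3\times 3$ system as $\Gamma\mathbf m^T=\mathbf 0$ with $\Gamma$ the skew-symmetric matrix of Criterion~\ref{secondCriterion} evaluated at $r=\kappa^{-1/2}$; since $\Gamma$ is odd-dimensional and skew-symmetric, $\det\Gamma=0$ is automatic, and you can write the kernel explicitly as $(w,-v,u)$. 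This buys you the existence of a nontrivial solution for free and bypasses the paper's determinant calculation. The sign analysis is essentially the same in spirit: the paper shows $q_{12},q_{21},q_{32}>0$ and $q_{13},q_{31},q_{23}<0$ via a direct angle argument on the arcs, while you observe that acuteness forces each of the three arcs between consecutive vertices to be less than $\pi$, so that for the cyclic ordering $\alpha_1<\alpha_2<\alpha_3$ one has $\alpha_2-\alpha_1,\alpha_3-\alpha_2\in(0,\pi)$ but $\alpha_3-\alpha_1\in(\pi,2\pi)$, giving $u,w>0$, $v<0$ and hence $(w,-v,u)$ strictly positive. Your identification of the obstacle is on target: the only place the hypothesis \emph{acute} enters is precisely this sign verification, and it deserves to be written out cleanly (your phrasing ``all three arc-separations lie strictly in a half-period'' is slightly ambiguous, since $\alpha_3-\alpha_1$ does not---it is the three \emph{consecutive} arcs that are all $<\pi$).
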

\begin{proof}
As mentioned earlier, we can assume that the great circle is the 
equator $z=0$. To form a fixed point of system \eqref{second}, the initial conditions of
the bodies of masses $m_1,m_2$, and $m_3$ must satisfy, at the initial instant $t=0$, the constraints 
$$ 
\ddot{\bf q}_i(0)=\dot{\bf q}_i(0)=0,\ \ i=1,2,3.
$$
Let ${\bf q}_i(0)=(x_i, y_i, 0), i=1,2,3$, be the initial position of the body of mass
$m_i, i=1,2,3$, on the equator $z=0$. Using equations \eqref{second},
a straightforward computation shows that the above conditions reduce to solving the linear homogeneous algebraic system
\begin{equation}
\begin{cases}
q_{12}m_2+q_{13}m_3=0\cr
\bar{q}_{12}m_2+\bar{q}_{13}m_3=0\cr
q_{21}m_1+q_{23}m_3=0\cr
\bar{q}_{21}m_1+\bar{q}_{23}m_3=0\cr
q_{31}m_1+q_{32}m_2=0\cr
\bar{q}_{31}m_1+\bar{q}_{32}m_2=0,
\end{cases}
\label{3pairs}
\end{equation}
for $m_1, m_2$, and $m_3$, where, for $i,j=1,2,3, i\ne j$,
$$q_{ij}=\frac{x_j-a_{ij}x_i}{(1-a_{ij}^2)^{3/2}},\ \bar{q}_{ij}=\frac{y_j-a_{ij}y_i}{(1-a_{ij}^2)^{3/2}},\ a_{ij}=\kappa x_ix_j+\kappa y_iy_j.$$
But the first and the second equation of the system are linearly
dependent. Indeed, multiplying the first equation by $\kappa x_1$, the second
equation by $\kappa y_1$, and adding the two new equations, we obtain an identity.
Similarly we can prove the linear dependence of the third and fourth equation, as well as of the fifth and sixth equation. Therefore system \eqref{3pairs} can be reduced to the linear homogeneous system
\begin{equation}
\begin{cases}
q_{12}m_2+q_{13}m_3=0\cr
q_{21}m_1+q_{23}m_3=0\cr
q_{31}m_1+q_{32}m_2=0,
\end{cases}
\label{3eq}
\end{equation}
in unknowns $m_1, m_2,$ and $m_3$. 

We will further show that this system has positive solutions.
To achieve this goal we will first prove that $\det A = 0$, where $A$ is the matrix
that defines system \eqref{3eq}, namely
$$A=\left[ \begin{array}{ccc}
0 & q_{12} & q_{13}\\
q_{21} & 0 & q_{23}\\
q_{31} & q_{32} & 0\\
\end{array} \right].$$
Notice first that 
$$
\det A = q_{12}q_{23}q_{31}+q_{13}q_{21}q_{32}.
$$
But $q_{12}$ and $q_{21}$ have the same denominator, which is never zero. 
The same is true for the pair $q_{23}$ and $q_{32}$, as well as for the pair 
$q_{31}$ and $q_{13}$. Therefore to prove that $\det A=0$, it is enough to
compute the numerator $E$ of $\det A$. Notice
that $E$ has the form
$$
E=(x_2-a_{12}x_1)(x_3-a_{23}x_2)(x_1-a_{13}x_3)+(x_3-a_{13}x_1)(x_1-a_{12}x_2)
(x_2-a_{23}x_3).
$$
A straightforward computation leads to
$$
E=2(1-a_{12}a_{13}a_{23})x_1x_2x_3+(a_{12}a_{23}-a_{13})x_1^2x_2+
(a_{13}a_{23}-a_{12})x_1^2x_3
$$
$$
+(a_{12}a_{13}-a_{23})x_1x_2^2+(a_{13}a_{12}-a_{23})x_1x_3^2
$$
$$
+(a_{13}a_{23}-a_{12})x_2^2x_3+(a_{12}a_{23}-a_{13})x_2x_3^2.
$$
Without loss of generality, we can assume that $x_1=0$, i.e.\ the body
of mass $m_1$ is fixed at the point of coordinates $(x_1,y_1,z_1)=(0, \kappa^{-1/2},0)$
on the sphere $S_\kappa^2$ (see Figure \ref{fixed}). Then $E=Sx_2x_3$,
where $S=-a_{13}x_3+a_{13}a_{23}x_2+a_{12}a_{23}x_3-a_{12}x_2$. Since
$x_2, x_3\ne 0$, to prove that $\det A = 0$ it is enough to show that $S=0$. 
Using again the fact that $x_1=0$, and writing the constants $a_{ij}$ explicitly,
we obtain
$$
S=-\kappa y_1y_3x_3+\kappa^2 y_1y_3(x_2x_3+y_2y_3)x_2
+\kappa^2y_1y_2(x_2x_3+y_2y_3)x_3-\kappa y_1y_2x_2
$$
$$
=-\kappa x_3y_1y_3-\kappa x_2y_1y_2+\kappa x_3y_1y_3(\kappa x_2^2+\kappa y_2^2)+\kappa x_2y_1y_2(\kappa x_3^2+\kappa y_3^2).
$$
But since $\kappa x_2^2+\kappa y_2^2=\kappa x_3^2+\kappa y_3^2=1$,
it follows that $S=0$, therefore $\det A = 0$, so system
\eqref{3eq} has other solutions than the trivial one. 

We must still prove that among these nontrivial solutions there is a class of strictly positive solutions. For this purpose, we will analyse the signs of the constants $q_{ij}, i, j=1,2,3,$ and see how they determine the signs of $m_1, m_2,$
and $m_3$. We already assumed that $x_1=0$. Since the triangle is acute, we can 
further suppose, without loss of generality, that $x_2>0$ and $x_3<0$, which means that the body of mass $m_2$ lies in the fourth quadrant on the equatorial circle, while $m_3$ is in the third quadrant (see Figure \ref{fixed}). There are no other possibilities because the triangle is acute. (In fact we showed in \cite{Diacu1} and \cite{Diacu001} that bodies lying in the same hemisphere cannot form fixed points, thus excluding right or obtuse triangles as fixed-point candidates. Right triangles are also excluded because they form singular configurations.) 

\def\JPicScale{0.5}
\ifx\JPicScale\undefined\def\JPicScale{1}\fi
\unitlength \JPicScale mm
\begin{figure}
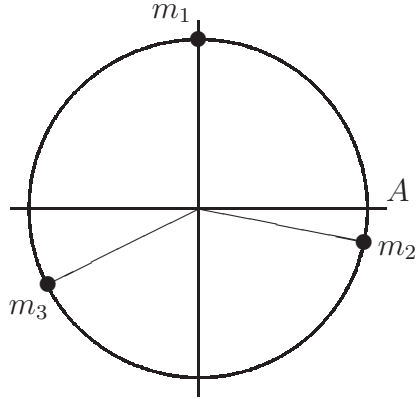


\bigskip
\caption{The fixed point solutions formed by the masses $m_1, m_2$, and $m_3$ 
on the geodesic (equator) $z=0$.}
\label{fixed}
\end{figure}

Under the above assumptions, we will show that 
\begin{equation}
q_{12}, q_{21}, q_{32} >0\ \ {\rm and}\ \ q_{13}, q_{31}, q_{23}<0.
\label{6ineq}
\end{equation} 
For this purpose, notice first that the denominators of $q_{ij}, i,j=1,2,3,$ are all positive, so we have to determine only the signs of their numerators.
Also remark that the angles $\alpha_{12}$ and $\alpha_{13}$ from the centre of the circle corresponding to the arcs $m_1m_2$ and $m_1m_3$, respectively (see Figure \ref{fixed}), both angles taken to be smaller than $\pi$, are larger than $\pi/2$. Since $a_{12}=\cos\alpha_{12}$ and $a_{13}=\cos\alpha_{13}$, it means that $a_{12}, a_{13}<0$. Then, using the fact that $x_1=0, x_2>0, x_3<0$, we can conclude that $q_{12}>0$ and $q_{13}<0$. Since $a_{12}=a_{21}$ and $a_{31}=a_{13}$, it also follows that $q_{21}>0$ and $q_{31}<0$.

To prove the last two inequalities in \eqref{6ineq}, let $\beta$ be the angle from the centre of the circle corresponding to the arc $m_2m_3$, $\alpha$ the similar angle 
corresponding to the arc $m_3A$ (see Figure \ref{fixed}), and $\gamma$ the similar angle corresponding to the arc $m_2A$, all taken to be smaller than $\pi$. Then, obviously, $\alpha=\beta+\gamma$. Also notice that $x_3=\cos\alpha, x_2=\cos\gamma,$ and $a_{23}=\cos\beta$. So 
$$x_3-a_{23}x_2=\cos\alpha-\cos\beta\cos\gamma=-\sin\beta\sin\gamma,
$$
which is negative because $0<\beta, \gamma <\pi$.
Therefore $q_{23}<0$. Finally, 
$$
x_2-a_{23}x_3=\cos\gamma-\cos\beta\cos\alpha=\sin\alpha\sin\beta,
$$
which is positive because $0<\alpha, \beta<\pi$. So $q_{23}>0$. To see now that system \eqref{3eq} has positive solutions, it is enough to notice that the two constants $q_{ij}$ showing up in each of its three equations have opposite signs. This remark completes the proof.
\end{proof}

A direct consequence of the above result is the possibility to generate 
relative equilibria from fixed points. This fact stems from the action produced by elements of the rotation group $SO(3)$ on a fixed point. From the mechanical point of view this means that we can obtain relative equilibria if we apply initial velocities, of equal speeds, tangentially to the geodesic, all oriented clockwise or all counterclockwise. This obvious remark together with Theorem \ref{scalene} prove the following result.

\begin{theorem}
Consider the curved $3$-body problem on the sphere $S_\kappa^2$, given by system \eqref{second} for $n=3$ and $\kappa>0$. Then for any acute triangle inscribed in a great circle of the sphere, there exist masses $m_1, m_2, m_3>0$ and initial velocities such that if the point particles are initially placed at the vertices of the triangle, the corresponding solution is a relative equilibrium that rotates on the great circle. 
\end{theorem}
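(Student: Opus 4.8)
The plan is to produce the relative equilibrium as the orbit of a fixed point under a one-parameter group of rotations, so that the statement becomes essentially a corollary of Theorem~\ref{scalene}. As in the proof of that theorem, I would first normalize the frame so that the given acute triangle is inscribed in the equator $z=0$, and invoke Theorem~\ref{scalene} to obtain masses $m_1,m_2,m_3>0$ for which the configuration ${\bf q}^0=({\bf q}_1^0,{\bf q}_2^0,{\bf q}_3^0)$, with ${\bf q}_i^0=(x_i,y_i,0)$ on that great circle, is a fixed point of system~\eqref{second}. Since the velocity-dependent terms of~\eqref{second} vanish at a fixed point, this forces $\widetilde\nabla_{{\bf q}_i}U_\kappa({\bf q}^0)=0$ for $i=1,2,3$. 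Then, for an arbitrary $\Omega\ne 0$, I would set ${\bf q}_i(t):=R_{\Omega t}\,{\bf q}_i^0$, where $R_\theta\in SO(3)$ is the rotation of $\mathbb{R}^3$ by angle $\theta$ about the $z$-axis, and show that $t\mapsto({\bf q}_1(t),{\bf q}_2(t),{\bf q}_3(t))$ solves~\eqref{second}. The prescribed initial velocities are then $\dot{\bf q}_i(0)=\Omega\,(0,0,1)\otimes{\bf q}_i^0$, tangent to the equator and all of the same speed and orientation, exactly as anticipated in the remark preceding the statement.

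The verification I have in mind is brief. For $\kappa>0$ the product $\odot$ is the Euclidean inner product, so every $R_\theta$ preserves it; hence $U_\kappa$ is $R_\theta$-invariant and its gradient is equivariant, $\widetilde\nabla_{{\bf q}_i}U_\kappa(R_\theta{\bf q}^0)=R_\theta\,\widetilde\nabla_{{\bf q}_i}U_\kappa({\bf q}^0)=0$, so the gradient term of~\eqref{second} stays identically zero along ${\bf q}(t)$. Next, since ${\bf q}_i^0$ lies on the equator, ${\bf q}_i(t)$ traces a Euclidean circle of radius $\kappa^{-1/2}$ in the plane $z=0$; a direct computation then gives $\ddot{\bf q}_i(t)=-\Omega^2{\bf q}_i(t)$ and $\kappa\,\dot{\bf q}_i\odot\dot{\bf q}_i=\kappa\Omega^2(x_i^2+y_i^2)=\Omega^2$, the last step using $\kappa(x_i^2+y_i^2)=1$ on $z=0$. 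Plugging these into the right-hand side of~\eqref{second}, the gradient term vanishes and the curvature term reduces to $-\Omega^2{\bf q}_i(t)=\ddot{\bf q}_i(t)$, so the equations are satisfied; the orthogonality of $R_{\Omega t}$ also keeps the constraints $\kappa{\bf q}_i\odot{\bf q}_i=1$ intact.

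Finally I would read off the conclusion. The orbit ${\bf q}(t)$ is of the form~\eqref{newrep} with $z_i\equiv 0$, equal radii $r_i=\kappa^{-1/2}$, and angular function $\omega(t)=\Omega t$ nonconstant while $r\equiv\kappa^{-1/2}$ is constant, hence a polygonal relative equilibrium in the sense of Section~3, rotating on the great circle with constant mutual distances. It is moreover defined for all $t$: an acute triangle has no pair of antipodal vertices and the motion is rigid, so no singularity of~\eqref{second} is ever reached. I do not expect a genuine obstacle here once Theorem~\ref{scalene} is in hand; the only steps needing care are the computation of the velocity-dependent term of~\eqref{second} on the equator and the observation that rotations about the axis orthogonal to the plane of the great circle form a one-parameter subgroup of $SO(3)$ fixing that circle setwise — which is what allows a fixed point to be deformed into a true relative equilibrium. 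The substantive analytic work, namely the vanishing of the determinant of the coefficient matrix and the existence of strictly positive mass solutions, was already carried out in the proof of Theorem~\ref{scalene}.
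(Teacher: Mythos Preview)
Your proposal is correct and follows essentially the same approach as the paper: the paper simply remarks that Theorem~\ref{scalene} provides masses forming a fixed point on the equator, and that acting by the one-parameter subgroup of $SO(3)$ of rotations about the $z$-axis turns this fixed point into a relative equilibrium. You supply the explicit verification (equivariance of the gradient, the identity $\kappa\dot{\bf q}_i\odot\dot{\bf q}_i=\Omega^2$ on the equator, and $\ddot{\bf q}_i=-\Omega^2{\bf q}_i$) that the paper leaves as an ``obvious remark,'' but the idea is identical.
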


The converse of Theorem \ref{scalene} is false, which means that not any three point particles of given masses can form a fixed point on a great circle of $S_\kappa^2$. We will provide a class of counterexamples in the case of isosceles triangles. In fact, the following result
identifies all masses for which there are no isosceles triangles that lead to fixed points
of the equations of motion of the curved $3$-body problem for $\kappa>0$.

\begin{theorem}
Consider the curved $3$-body problem on the sphere $S_\kappa^2$, given by system \eqref{second} for $n=3$, $\kappa>0$, as well as the masses $m_1=m_2=:M>0$ and $m_3=:m>0$. Assume that the initial conditions are such that the triangle
having these masses at its vertices is acute and isosceles, with the equal masses
corresponding to the base. Then, for $M\ge 4m$, no isosceles triangle
can form a fixed point.
\end{theorem}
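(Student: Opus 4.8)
The plan is to use the reflective symmetry of an isosceles triangle to collapse the fixed‑point equations to a single scalar relation between the mass ratio $M/m$ and the ``opening angle'' of the triangle, and then to read off that this relation cannot hold once $M/m\ge 4$.

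First I would set up coordinates exactly as in the proof of Theorem \ref{scalene}: a fixed point lies on a great circle, which we take without loss of generality to be the equator $z=0$, and write ${\bf q}_i=\kappa^{-1/2}(\cos\theta_i,\sin\theta_i,0)$. Because the triangle is isosceles with the equal masses $m_1=m_2=M$ on the base, the configuration is invariant under the reflection that fixes the position of $m_3$ and interchanges $m_1$ and $m_2$; after a rotation we may therefore assume $\theta_3=0$ and $\theta_1=\psi$, $\theta_2=-\psi$ for a single parameter $\psi\in(0,\pi)$. A quick look at the subtended arcs — equivalently at the inscribed angles $\tfrac{\psi}{2},\tfrac{\psi}{2},\pi-\psi$ of the three vertices — shows that the triangle is acute precisely when $\psi\in(\pi/2,\pi)$, and this is the range I would work in. In this frame $a_{13}=a_{23}=\cos\psi$ and $a_{12}=\cos 2\psi$, where $a_{ij}=\kappa\,{\bf q}_i\odot{\bf q}_j$.

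Next I would impose the fixed‑point conditions. Since $\dot{\bf q}_i(0)=0$, the velocity term in \eqref{second} drops and $\ddot{\bf q}_i(0)=0$ becomes the vector identity $\sum_{j\ne i}m_j\,\bigl({\bf q}_j-a_{ij}{\bf q}_i\bigr)\big/(1-a_{ij}^2)^{3/2}={\bf 0}$, i.e. the system \eqref{3pairs}--\eqref{3eq} used in Theorem \ref{scalene}. Computing ${\bf q}_1-a_{31}{\bf q}_3$ and ${\bf q}_2-a_{32}{\bf q}_3$, the equation for $i=3$ reduces to $(m_1-m_2)[0,1,0]={\bf 0}$, which is automatically satisfied. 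For $i=1$ the two vectors ${\bf q}_2-a_{12}{\bf q}_1$ and ${\bf q}_3-a_{13}{\bf q}_1$ are both parallel to the equatorial tangent direction $[\sin\psi,-\cos\psi,0]$ at ${\bf q}_1$ (they are the components of ${\bf q}_2,{\bf q}_3$ orthogonal to ${\bf q}_1$, hence tangent to the equator there), so the equation collapses to the scalar relation
\[
\frac{m_2\sin 2\psi}{|\sin 2\psi|^{3}}+\frac{m_3\sin\psi}{\sin^{3}\psi}=0 ;
\]
the equation for $i=2$ is equivalent by the symmetry. On the acute range $\psi\in(\pi/2,\pi)$ one has $\sin\psi>0$ and $\sin 2\psi<0$, so this becomes $m_2/\sin^{2}2\psi=m_3/\sin^{2}\psi$, that is $M=4m\cos^{2}\psi$.

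Finally I would conclude: as $\psi$ runs over $(\pi/2,\pi)$, the quantity $4\cos^{2}\psi$ takes every value in $(0,4)$ and never attains $4$, the endpoints $\psi=\pi/2$ (right triangle) and $\psi=\pi$ (collision/antipodal, hence singular) being excluded. Thus an acute isosceles triangle with $m_1=m_2=M$ on the base can be a fixed point only when $M/m=4\cos^{2}\psi<4$, so when $M\ge 4m$ no such fixed point exists, which is the assertion. I do not expect a genuine obstacle here; the only step requiring care is the sign bookkeeping together with the determination of the admissible interval for $\psi$ — in particular the fact that $\sin 2\psi<0$ on exactly the acute range, which is what turns the scalar equation into $M=4m\cos^{2}\psi$ with the decisive factor $4$ — while the rest is symmetry and direct trigonometric computation.
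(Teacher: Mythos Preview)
Your proof is correct and follows essentially the same strategy as the paper: exploit the reflection symmetry of the isosceles configuration on the great circle, reduce the fixed-point system to a single scalar relation between $M/m$ and one shape parameter, and observe that this relation forces $M<4m$. The only difference is cosmetic --- you use the angular parameter $\psi$ and obtain $M=4m\cos^{2}\psi$, while the paper uses Cartesian coordinates $(x,y)$ and obtains the equivalent relation $M=4\kappa m y^{2}$ (indeed $\kappa y^{2}=\cos^{2}\psi$ under the obvious identification of frames); your angular computation is arguably cleaner, but the idea and the bound are identical.
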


\def\JPicScale{0.5}
\ifx\JPicScale\undefined\def\JPicScale{1}\fi
\unitlength \JPicScale mm
\begin{figure}
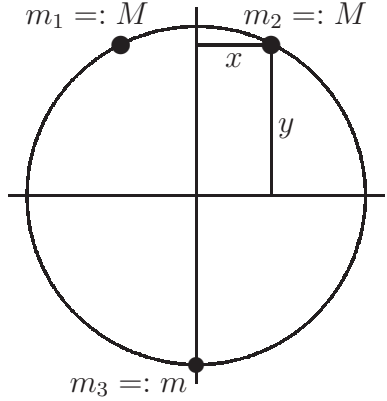


\bigskip
\caption{The initial positions of $m_1, m_2$, and $m_3$, which
form an isosceles triangle on the geodesic (equator) $z=0$.}\label{cir}
\end{figure}

\begin{proof}
Let ${\bf q}_i(0)=(x_i(0), y_i(0), 0), \ i=1,2,3,$ be the initial position of the masses with the symmetries of an isosceles triangle as in Figure \ref{cir}, i.e.\ such that 
$$x_1(0)=x,\ x_2(0)=-x,\ x_3(0)=0,$$
$$\ y_1(0)=y_2(0)=y,\ y_3(0)=-\kappa^{-1/2},$$ 
$$z_1(0)=z_2(0)=z_3(0)=0,$$
with $0<x,y<\kappa^{-1/2}$. The conditions that the coordinates form a fixed point are
$$ 
\ddot{\bf q}_i(0)=\dot{\bf q}_i(0)=0, \ \ i=1,2,3.
$$

Using these conditions in system \eqref{second}, denoting $\ddot{x}:=\ddot{x}(0),  \ddot{y}:=\ddot{y}(0)$ and asking that $\dot{x}(0)=\dot{y}(0)=0$, a straightforward computation leads us to the relations
$$
\ddot{x}=-\frac{M-4\kappa m y^2}{4\kappa^{1/2}x^2y},\ \
\ddot{y}=\frac{M-4\kappa m y^2}{4\kappa^{1/2}xy^2}.
$$

Consequently 3 bodies lying at the vertices of an isosceles triangle form
a fixed point of the equations of motion if and only if 
\begin{equation}
M=4\kappa m y^2, \ {\rm with}\ 0<y<\kappa^{-1/2}.
\label{Mm}
\end{equation}
Then some necessary conditions that an isosceles triangle forms a fixed point are
$$
0<\frac{M}{4\kappa m}<\frac{1}{\kappa}.
$$
Since $\kappa>0$, the first inequality is always satisfied. The second inequality
reduces to $M<4m$. The corresponding $x>0$ is then obtained from $\kappa x^2+
\kappa y^2=1$ and leads to the same condition. So, for $M\ge 4m$, there are no isosceles triangles that form fixed points on a great circle of ${\bf S}^2_\kappa$.
This remark completes the proof.
\end{proof}

\noindent{\it Remark}. When the triangle is equilateral, the evaluation of $\sin\varphi$, where $\varphi$ is the angle between the abscissa and the radius of the circle 
to $m_2$ (see Figure \ref{cir}), shows that $y=1/(2\kappa^{-1/2})$, so by \eqref{Mm} we can conclude that $M=m$, in agreement with what we know about Lagrangian
solutions.

\bigskip 

\noindent{\bf Acknowledgment.} The research presented in this paper was supported in part by a Discovery Grant from NSERC of Canada. 



\begin{thebibliography}{99}

\bibitem{abra} R.~Abraham and J.E.~Marsden. Foundations of Mechanics (2nd ed) Benjamin-Cummings, Reading MA, 1978.
\bibitem{Ber} J.~Bertrand, Th\'eor\`eme relatif au mouvement d'un point attir\'e vers
un centre fixe, {C.\ R.\ Acad.\ Sci.}~{\bf 77} (1873), 849-853.
\bibitem{Car} J.F.~Cari\~nena, M.F.~Ra\~nada, and M.~Santander, Central potentials on spaces of constant curvature: The Kepler problem on the two-dimensional sphere ${\bf S}^2$ and the hyperbolic plane ${\bf H}^2$, \emph{J.~Math.~Phys.}~{\bf 46} (2005), 052702.
\bibitem{Diacu0} F.~Diacu, Near-collision dynamics for particle systems with
quasihomogeneous potentials, {\it J.\ Differential Equations} {\bf 128}, 58-77 (1996).
\bibitem{Diacu1bis} F. Diacu, On the singularities of the curved $n$-body
problem, arXiv:0812.3333 (2008), 20 p., and {\it Trans.~Amer. Math.~Soc.} (in press).
\bibitem{Diacu4} F.~Diacu and E.~P\'erez-Chavela, Homographic solutions of the
curved $3$-body problem, {\it J.\ Differential Equations} {\bf 250} (2011), 340-366.
\bibitem{Diacu1} F.~Diacu, E.~P\'erez-Chavela, and M.~Santoprete,
The $n$-body problem in spaces of constant curvature, arXiv:0807.1747, 54 p.
\bibitem{Diacu001} F.~Diacu, E.~P\'erez-Chavela, and M.~Santoprete, The
$n$-body problem in spaces of constant curvature. Part I: Relative equilibria,
(2010), 34 p. (submitted to {\it Journal of Nonlinear Science}).
\bibitem{Diacu002} F.~Diacu, E.~P\'erez-Chavela, and M.~Santoprete, The
$n$-body problem in spaces of constant curvature. Part II: Singularities,
(2010), 14 p. (submitted to {\it Journal of Nonlinear Science}).
\bibitem{Diacu2} F.~Diacu, E.~P\'erez-Chavela, and M.~Santoprete, Saari's conjecture for the collinear $n$-body problem, {\it Trans.~Amer.~Math.~Soc.} {\bf 357}, 10 (2005), 4215-4223. 
\bibitem{Diacu3} F.~Diacu, T.~Fujiwara, E.~P\'erez-Chavela, and M.~Santoprete,
Saari's homographic conjecture of the 3-body problem, {\it Trans.~Amer.~Math.~Soc.} {\bf 360}, 12 (2008), 6447-6473.
\bibitem{Ein}  A.~Einstein, L.~Infeld, and B.~Hoffmann, The gravitational equations
and the problem of motion, {\it Ann. of Math.} {\bf 39}, 1 (1938), 65-100.
\bibitem{Elma} B.~Elmabsout, Sur l'existence de certain configurations d'\'equilibre relatif dans le probl\`eme des $N$ corps, {\it Celest.\ Mech.\ Dyn.\ Astr.}~{\bf 41} (1988), 131-151.
\bibitem{Fock} V.~A.~Fock, Sur le mouvement des masses finie d'apr\`es
la th\'eorie de gravitation einsteinienne, {\it J.~Phys.~Acad.~Sci.~USSR}
{\bf 1} (1939), 81-116.
\bibitem{Gilbert} G.~T.~Gilbert, Positive definite matrices and Sylvester's criterion, {\it Amer. Math. Monthly} {\bf 98}, 1 (1991), 44-46.
\bibitem{Inf} L.~Infeld, On a new treatment of some eigenvalue problems,
{\it Phys.~Rev.} {\bf 59} (1941), 737-747.
\bibitem{InfS} L.~Infeld and A.~Schild, A note on the Kepler problem in a space
of constant negative curvature, {\it Phys.~Rev.} {\bf 67} (1945), 121-122.
\bibitem{Kil1} W.~Killing, Die Rechnung in den nichteuklidischen Raumformen,
{\it J. Reine Angew. Math.} {\bf 89} (1880), 265-287.
\bibitem{Kil10} W.~Killing, Die Mechanik in den nichteuklidischen Raumformen,
{\it J. Reine Angew. Math.} {\bf 98} (1885), 1-48.
\bibitem{Kil2} W.~Killing, {\it Die Nicht-Eukildischen Raumformen in Analytischer
Behandlung}, Teubner, Leipzig, 1885.
\bibitem{Civ} T.~Levi-Civita, The relativistic problem of several bodies, {\it Amer.\ J.\ Math.} {\bf 59}, 1 (1937), 9-22.
\bibitem{Civita} T.~Levi-Civita, {\it Le probl\`eme des n corps en relativit\'e g\'en\'erale}, Gauthier-Villars, Paris, 1950; or the English translation: {\it The $n$-body problem in general relativity}, D.~Reidel, Dordrecht, 1964.
\bibitem{Lie1} H.~Liebmann, Die Kegelschnitte und die Planetenbewegung im 
nichteuklidischen Raum, {\it Berichte K\"onigl.~S\"achsischen Gesell. Wiss., Math.~Phys.~Klasse} {\bf 54} (1902), 393-423.
\bibitem{Lie2} H.~Liebmann, \"Uber die Zentralbewegung in der nichteuklidische
Geometrie, {\it Berichte K\"onigl.~S\"achsischen Gesell. Wiss., Math.~Phys.~Klasse} {\bf 55} (1903), 146-153.
\bibitem{Lie3} H.~Liebmann, {\it Nichteuklidische Geometrie}, G.~J.~G\"oschen, Leipzig, 1905; 2nd ed.~1912; 3rd ed.~Walter de Gruyter, Berlin, Leipzig, 1923.
\bibitem{Lip} R.~Lipschitz, Extension of the planet-problem to a space of $n$ dimensions and constant integral curvature, {\it Quart.~J.~Pure Appl.~Math.} {\bf 12} (1873), 349-370.
\bibitem{tudor} J.E. Marsden and T.S. Ratiu, Introduction to Mechanics and Symmetry. (2nd ed.) Springer, 1999.
\bibitem{Mill} A.~I.~Miller, The myth of Gauss's experiment on the Euclidean
nature of physical space, {\it Isis} {\bf 63}, 3 (1972), 345-348.
\bibitem{Perko} L.~Perko and E.~Walter, Regular polygon solutions of the $N$-body
problem, {\it Proc.\ Amer.\ Math.\ Soc.}~{\bf 94} (1985), 301-309.
\bibitem{Saari} D. Saari, \emph{Collisions, Rings, and Other Newtonian $N$-Body Problems}, American Mathematical Society, Regional Conference Series in Mathematics, No. 104, Providence, RI, 2005.
\bibitem{Sche} E.~Schering, Die Schwerkraft im Gaussischen R\"aume, 
{\it Nachr. K\"onigl.~Gesell. Wiss.~G\"ottingen} 13 July, {\bf 15} (1870), 311-321.
\bibitem{Sche1} E.~Schering, Die Schwerkraft in mehrfach ausgedehnten  Gaussischen und Riemmanschen R\"aumen, 
{\it Nachr. K\"onigl.~Gesell.~Wiss.~G\"ottingen} 26 Febr., {\bf 6} (1873), 149-159.
\bibitem{Schr} E.~Schr\"odinger, A method for determining quantum-mechanical
eigenvalues and eigenfunctions, {\it Proc.~Royal Irish Acad., Sect. A} {\bf 46} (1940), 9-16.
\bibitem{Win} A.~Wintner, {\it The Analytical Foundations of Celestial
Mechanics}, Princeton University Press, Princeton, N.J., 1941.

\end{thebibliography}
\end{document}